% June 3, 2007 Third arxiv posting
\documentclass[12pt,reqno]{amsart}

%\usepackage[active]{srcltx}

%%%% DELETE THESE LINES FOR FINAL VERSION
%
%\newcommand{\ver}{{\it v5}}%<---PUT VERSION NUMBER HERE
 % THIS IS TO MARK COMMENTS, CHANGES

%\usepackage{showkeys}  % THIS SHOWS LABELS
%%%%

\headheight=6.15pt
\textheight=8.75in
\textwidth=6.5in
\oddsidemargin=0in
\evensidemargin=0in
\topmargin=0in

\usepackage{latexsym}
\usepackage{graphicx}
\newcommand{\QQ}{\mathcal{Q}}
\newcommand{\T}{{\mathbf T}^m}

\newcommand{\szego}{Szeg\"o }

\newcommand{\Si}{\Sigma}

\newcommand{\kahler}{K\"ahler }
\newcommand{\poly}{{\mathcal Poly}}

\newcommand{\wh}{\widehat}
\newcommand{\PP}{{\mathbb P}}
\newcommand{\N}{{\mathbb N}}
\newcommand{\R}{{\mathbb R}}
\newcommand{\C}{{\mathbb C}}

\newcommand{\Z}{{\mathbb Z}}

\newcommand{\CP}{\C\PP}
\renewcommand{\d}{\partial}
\newcommand{\dbar}{\bar\partial}
\newcommand{\ddbar}{\partial\dbar}

\newcommand{\half}{{\frac{1}{2}}}

\newcommand{\FS}{{{\operatorname{FS}}}}

\renewcommand{\phi}{\varphi}

 %%%%%%%%%%%%% Tatsuya added
 %%%%%%%%%%%%% Tatsuya added
 %%%%%%%%%%%%%%%%%%% Tatsuya added
 %%%%%%%%%%%%%%%%%%%%%%%%%%%%%%%% Tatsuya added
 %%%%%%%%%%%%%%%%%%%%%%%%%%%%%%%% Tatsuya added
 %%%%%%%%%%%%%%%%%%%%%%%%%%%%%% Tatsuya added
 %%%%%%%%%%%%%%%%%% Tatsuya added
 %%%%%%%%%%%%%%%%%%%%% Tatsuya added

\newcommand{\ccal}{\mathcal{C}}

\newcommand{\ecal}{\mathcal{E}}

\newcommand{\hcal}{\mathcal{H}}

\newcommand{\lcal}{\mathcal{L}}

\newcommand{\ncal}{\mathcal{N}}
\newcommand{\ocal}{\mathcal{O}}
\newcommand{\pcal}{\mathcal{P}}

\newcommand{\rcal}{\mathcal{R}}

\newcommand{\al}{\alpha}

\newcommand{\om}{\omega}

\newtheorem{theo}{{\sc Theorem}}[section]
\newtheorem{maintheo}{{\sc Theorem}}
\newtheorem{cor}[maintheo]{{\sc Corollary}}

\newtheorem{lem}[theo]{{\sc Lemma}}
\newtheorem{prop}[theo]{{\sc Proposition}}

\newenvironment{rem}{\medskip\noindent{\it Remarks:\/} }{\medskip}
\newenvironment{defin}{\medskip\noindent{\it Definition:\/} }{\medskip}

\title[Bernstein polynomials, Bergman kernels and toric K\"ahler  varieties]
{Bernstein polynomials, Bergman kernels and toric K\"ahler
varieties}

\author{Steve Zelditch }
\address{Department of Mathematics, Johns Hopkins University, Baltimore, MD
21218, USA}
\email{zelditch@math.jhu.edu}

\thanks{Research partially supported by NSF grant  DMS-0603850}

\date{\today}

\begin{document}

\maketitle

\begin{abstract} We show that the  classical Bernstein polynomials $B_N(f)(x)$  on the interval $[0, 1]$ (and their higher
dimensional generalizations on  the simplex $\Sigma_m \subset
\R^m$) may be expressed in terms of Bergman kernels for the
Fubini-Study metric on $\CP^m$: $B_N(f)(x)$ is obtained by
applying   the Toeplitz operator $f(N^{-1} D_{\theta})$  to the
Fubini-Study Bergman kernels.  The expression generalizes
immediately to any toric \kahler variety and Delzant polytope, and
gives a novel definition of Bernstein `polynomials' $B_{h^N}(f)$
relative to any toric \kahler variety. They uniformly approximate
any continuous function $f$ on the associated polytope $P$  with
all the properties of classical Bernstein polynomials. Upon
integration over the polytope one obtains a complete asymptotic
expansion for the Dedekind-Riemann sums $\frac{1}{N^m}
\sum_{\alpha \in N P} f(\frac{\alpha}{N}) $ of $f \in
C^{\infty}(\R^m)$, of a type similar to the Euler-MacLaurin
formulae.
\end{abstract}

\section*{Introduction}

Our starting point is the  observation that  the classical
Bernstein polynomials
\begin{equation} \label{NDBERNDEF} B_N(f)(x) =  \sum_{\alpha \in \N^m: |\alpha| \leq N}{N \choose
\alpha} x^{\alpha}(1 - ||x||)^{N - |\alpha|} f(\frac{\alpha}{N}),
\end{equation}   on the $m$-simplex $\Sigma_m \subset \R^m$ may be expressed   in terms of the  Bergman-\szego
 kernels $\Pi_{h_{FS}^N}(z,w)$ for the
Fubini-Study metric on $\CP^m$:  Let $e^{i \theta}$ denote the
standard $\T = (S^1)^m$ action on $\C^m$ and and let
$D_{\theta_j}$ denote the linearization (or `quantization') of its
infinitesimal generators on $H^0(\CP^m, \ocal(N))$. As will be
shown in \S \ref{FS} (see also \S \ref{BPTO}),
\begin{equation} \label{BN} B_N(f)(x)  =  \frac{1}{\Pi_{h_{FS}^N}(z,z)} f(N^{-1} D_{\theta}) \Pi_{h_{FS}^N}(e^{i \theta} z,
 z)
|_{\theta = 0, z = \mu_{h_{FS}}^{-1}(x)}, \end{equation} where $f
\in C_0^{\infty}(\R^m)$. Here, $\Pi_{h_{FS}^N}$  denotes the
\szego or Bergman kernel on  powers $\ocal(N) \to \CP^m$ of the
invariant hyperplane line bundle,  $f(N^{-1} D_{\theta})$ is
defined by the spectral theorem and $\mu_{h_{FS}}$ is the moment
map corresponding to $h_{FS}$. Thus, the Bernstein polynomial $B_N
f(x)$ is the Berezin lower symbol for the Toeplitz operator $\Pi_N
f(N^{-1} D_{\theta}) \Pi_N$, i.e. the value of its kernel on the
diagonal. From this formula, many properties of Bernstein
polynomials may be derived from properties of the Fubini-Study
Bergman-\szego kernel.

 Furthermore, the  formula
(\ref{BN})   generalizes immediately to any polarized  toric
\kahler variety $(L, M, \omega)$ and defines analogues
$B_{h^N}(f)(x)$  of Bernstein polynomials for any Delzant  poytope
$P$ and any positively curved toric hermitian metric $h$   on the
invariant line bundle associated to $P$. We simply replace the
Hermitian line bundle $\ocal(1) \to \CP^m$ with its Fubini-Study
metric  by any toric invariant Hermitian line bundle $(L, h) \to
(M, \omega)$ (see Definition \ref{BB}).

The connection between Bernstein polynomials and Bergman-\szego
kernels may be used to obtain asymptotic expansions of Bernstein
polynomials as the degree $N \to \infty$;

\begin{theo} \label{BBa} Let $(L, h) \to (M, \omega)$ be a toric Hermitian invariant
line bundle over a  toric \kahler variety with associated moment
polytope $P$. Let $f \in C_0^{\infty}(\R^m)$ and let
$B_{h^N}(f)(x)$ denote its Bernstein polynomial approximation in
the sense of Definition \ref{BB}.  Then there exists a complete
asymptotic expansion,
$$ B_{h^N}(f)(x) = f(x) + \lcal_1 f(x) N^{-1} + \lcal_2 f(x) N^{-2} + \cdots + \lcal_m f(x) N^{-m}
+ O(N^{- m - 1}), $$ in $C^{\infty}(\bar{P})$, where $\lcal_j$ is
a differential operator of order $2j$ depending only on curvature
invariants of the metric $h$; the expansion may be differentiated
any number of times.
\end{theo}
In the case of classical Bernstein polynomials (\ref{NDBERNDEF})
(i.e. the interval or simplex) , this expansion has recently  been
derived by L. H\"ormander  \cite{Ho} by a different method (see
\ref{ASYMPTOTICS}). The approach taken here is to use the Boutet
de Monvel-Sj\"ostrand approximations of Bergman-\szego kernels,
with some simplifications in the case of  toric hermitian metrics
\cite{BSj,STZ}. The operators $\lcal_j$ are computable from the
coefficients of the asymptotic expansion of the Bergman-\szego
kernel $\Pi_{h^N}(z,z)$ on the diagonal in \cite{Z2, Lu}. It
should be noted that for general toric Hermitian line bundles, the
Bernstein `polynomials' are not quite polynomials in the usual
sense, although they are algebro-geometric objects in the sense of
\cite{D,T}; see \S \ref{DEF} for further discussion.

As defined in (\ref{BN}) and in  Definition \ref{BB}, the
Bernstein polynomials are quotients \begin{equation}
\label{QUOTIENT} B_{h^N}(f)(x) = \frac{\ncal_{h^N}
f(x)}{\Pi_{h^N}(\mu_h^{-1}(x), \mu_h^{-1}(x))}
\end{equation} of a {\it numerator polynomial} $\ncal_{h^N} f(x)$
by  the denominator $\Pi_{h^N}(z,z)$ with $\mu_h(z) = x$. Here,
$\mu_h$ is the moment map associated to the \kahler form
$\omega_h$ associated to $h$. The numerator polynomials also admit
complete asymptotic expansions, and indeed the Bernstein
polynomial expansions are derived from the numerator expansion and
from the asymptotic expansion of the denominator. Hence,  Theorem
\ref{BBa} follows from:

\begin{theo} \label{NUM} With the same assumptions as above,  there
exist differential operators $\ncal_j$,   such that
$$ \ncal_{h^N}(f)(x)
\sim  N^m   f(x)  + N^{m-1} \ncal_1 f(x)  + \cdots,$$ where the
operators $\ncal_j$ are computable from the Bergman kernel
expansion for $\Pi_{h^N}(z,z)$.
\end{theo}

Theorem \ref{NUM}  has an application to Dedekind-Riemann sums
over lattice points in dilates of the polytope $P$, i.e. sums of
the form
$$ \sum_{\alpha \in N P } f(\frac{\alpha}{N}),
\;\; f \in C_0^{\infty}(\R^m). $$
 Upon integration of $\ncal_{h^N} f(x)$
over $P$ one obtains:

\begin{cor} \label{EM} Let  $f \in C_0^{\infty}(\R^m)$. Then there
exist differential operators $\ecal_j$,   such that
$$ \sum_{\alpha \in N P } f(\frac{\alpha}{N})
\sim  N^m  \int_P f(x) dx + \frac{ N^{m-1}}{2}  \int_{\partial P}
f(x) d \sigma + N^{m-2} \int_{P} \ecal_2 f (x) dx + \cdots,$$
where $\sigma$ is the Leray measure on $\partial P$ corresponding
to the affine defining functions $\ell_r (x) = \langle x, \nu_r
\rangle $ of the boundary facts (cf. \ref{ELLDEF}). That is, on
the $r$th facet of $\partial P$, $d \ell_r \wedge d\sigma = dx$.
\end{cor}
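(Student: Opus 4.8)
The plan is to integrate the numerator expansion of Theorem \ref{NUM} over the polytope $P$ and relate the left side to the Riemann sum via the defining formula \eqref{BN}--\eqref{QUOTIENT} of the numerator polynomial. First I would recall that the numerator polynomial is built from the Bergman--\szego kernel by applying the Toeplitz operator $f(N^{-1}D_\theta)$ and setting $\theta=0$, so that $\ncal_{h^N}(f)(x) = f(N^{-1}D_\theta)\Pi_{h^N}(e^{i\theta}z,z)|_{\theta=0,\,\mu_h(z)=x}$. Writing the Bergman kernel in the monomial basis adapted to the torus action, $\Pi_{h^N}(e^{i\theta}z,z) = \sum_{\alpha\in NP\cap\Z^m} e^{i\langle\alpha,\theta\rangle} P^{h^N}_\alpha(z)$ where $P^{h^N}_\alpha(z) = |s_\alpha(z)|^2_{h^N}/\|s_\alpha\|^2$ are the normalized monomial densities, applying $f(N^{-1}D_\theta)$ and evaluating at $\theta=0$ replaces each term by $f(\alpha/N) P^{h^N}_\alpha(z)$. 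Hence $\ncal_{h^N}(f)(x) = \sum_{\alpha\in NP\cap\Z^m} f(\alpha/N)\, P^{h^N}_\alpha(\mu_h^{-1}(x))$.

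The key point is then that $\int_P P^{h^N}_\alpha(\mu_h^{-1}(x))\,dx = 1$ for each lattice point $\alpha$: this is the statement that, in the right normalization, the monomial densities push forward under the moment map to probability measures on $P$ (equivalently, $\int_M |s_\alpha|^2_{h^N}\,dV_{\omega_h} = \|s_\alpha\|^2$ combined with the fact that the moment map pushes the symplectic volume to Lebesgue measure on $P$, up to the normalization fixed in the earlier sections). Granting this, integrating the displayed expression for $\ncal_{h^N}(f)$ over $P$ collapses the integral of each term to $f(\alpha/N)$, giving exactly $\int_P \ncal_{h^N}(f)(x)\,dx = \sum_{\alpha\in NP\cap\Z^m} f(\alpha/N)$. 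I would then invoke Theorem \ref{NUM}, integrate its asymptotic expansion term by term (justified since the expansion holds in $C^\infty(\bar P)$ and may be differentiated, hence certainly integrated, against the fixed smooth density $dx$), and read off $\int_P \ncal_{h^N}(f)\,dx \sim N^m\int_P f\,dx + N^{m-1}\int_P \ncal_1 f\,dx + \cdots$. It remains to identify $\int_P \ncal_1 f\,dx$ with $\tfrac12\int_{\partial P} f\,d\sigma$; for this I would use the explicit form of $\ncal_1$ coming from the subleading term of the Bergman kernel expansion of Boutet de Monvel--Sj\"ostrand (the $\tfrac12$ scalar curvature term, specialized to toric metrics as in \cite{STZ}), integrate by parts, and observe that the boundary terms organize into the Leray measure $d\sigma$ on the facets — the universal coefficient $\tfrac12$ being exactly the one that already appears in the classical Euler--MacLaurin/Khovanskii--Pukhlikov picture and is metric-independent. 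The operators $\ecal_j$ for $j\ge 2$ are then defined as whatever differential operators make $\int_P\ncal_j f\,dx = \int_P \ecal_j f\,dx$, after moving derivatives off the interior using the divergence theorem.

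The main obstacle I anticipate is the bookkeeping of normalizations: one must be careful that the normalization of $dx$ (lattice versus Lebesgue), of the Leray measure $d\sigma$, of the monomial norms $\|s_\alpha\|^2$, and of the moment map $\mu_h$ are mutually consistent so that the identity $\int_P P^{h^N}_\alpha(\mu_h^{-1}(x))\,dx = 1$ holds on the nose with no stray factors of $(2\pi)^m$ or $N^m$. A secondary subtlety is that $f\in C_0^\infty(\R^m)$ need not vanish near $\partial P$, so the interpolation/localization arguments used for the Bernstein expansion apply verbatim only in the interior; near $\partial P$ one relies on the fact that $\ncal_{h^N}(f)$ is globally defined by the exact formula above (no localization needed for the integrated statement), and that Theorem \ref{NUM} is asserted in $C^\infty(\bar P)$ up to the closed boundary. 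Once these normalization and boundary-regularity points are pinned down, the corollary is a direct integration of Theorem \ref{NUM}.
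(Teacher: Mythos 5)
Your overall strategy coincides with the paper's: the paper also obtains Corollary \ref{EM} by integrating the numerator expansion over $P$, using implicitly the same exact identity $\int_P \ncal_{h^N}(f)\,dx = \sum_{\alpha\in NP}f(\alpha/N)$ (which, as you say, follows from $\int_P \pcal_{h^N}(\alpha,\mu_h^{-1}(x))\,dx = 1$, i.e.\ from $\|s_\alpha\|^2_{h^N}=\int_M |s_\alpha|^2_{h^N}\,\omega_h^m/m!$ together with the moment-map pushforward $\mu_{h*}(\omega_h^m/m!)=dx$), and then identifying the coefficient of $N^{m-1}$. So the route is the same.

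Where your sketch is too thin is precisely at the identification $\int_P\ncal_1 f\,dx=\tfrac12\int_{\partial P}f\,d\sigma$, and as written it misdescribes $\ncal_1$. You attribute $\ncal_1$ to ``the $\tfrac12$ scalar curvature term'' of the Bergman amplitude, but the order-$N^{m-1}$ coefficient of $\ncal_{h^N}(f)$ is a genuine second-order operator: schematically $\ncal_1 f = \tfrac12 S\,f + \tfrac12\sum_{j,k}u_\phi^{jk}\,f_{,jk}$, where the first piece comes from the subleading Bergman coefficient $a_1=\tfrac12 S$ and the second from the Hessian of the phase (equivalently the second moment $I^{(2)}_{h^N}$), with $u_\phi^{jk}=(\nabla^2 u_\phi)^{-1}_{jk}=H_z$ the inverse Hessian of the symplectic potential via Legendre duality. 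If $\ncal_1$ were only $\tfrac12 Sf$ there would be no derivatives on $f$ to integrate by parts, and $\int_P \tfrac12 Sf\,dx$ is metric-dependent with no mechanism to become $\tfrac12\int_{\partial P}f\,d\sigma$. The actual cancellation uses Abreu's formula $S=-\sum_{j,k}(u_\phi^{jk})_{,jk}$ together with Donaldson's integration-by-parts identity (Lemma 3.3.5 of \cite{D2}): $\int_P\sum u_\phi^{jk}f_{,jk}\,dx = \int_P\sum(u_\phi^{jk})_{,jk}f\,dx + \int_{\partial P}f\,d\sigma$. This last step is not a routine application of the divergence theorem, because $u_\phi$ has logarithmic singularities on $\partial P$ so that $u_\phi^{jk}$ vanishes to first order there: one integration by parts is boundary-free because of this vanishing, and the boundary term in the other is exactly what produces the Leray measure (and the sign must be checked, e.g.\ against the one-dimensional case). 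With this in place, the $\tfrac12 S f$ bulk term and the bulk piece of the integrated Hessian term cancel and only $\tfrac12\int_{\partial P}f\,d\sigma$ survives. Your proposal gestures at this but does not name the identity that makes it work; once you supply Abreu's formula for $S$ in symplectic coordinates and Donaldson's integration-by-parts lemma, the argument is exactly the paper's.
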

\noindent  Exact and asymptotic formulae for $ \sum_{\alpha \in N
P} f(\frac{\alpha}{N})$  have been previously proved for special
$f$ using the generalized Euler-MacLaurin formulae of
Khovanskii-Pukhlikov, Brion-Vergne, Guillemin-Sternberg and others
(cf. \cite{G,GS,GSW,KSW}).  For purposes of comparison, Theorem
4.2 of \cite{GS} states that for $f \in C_0^{\infty}(\R^n)$,
\begin{equation} \label{EMG} \sum_{\alpha \in \N^m: |\alpha| \leq
N} f(\frac{\alpha}{N}) \sim \left( \sum_F \sum_{\gamma \in
\Gamma^1_F} \tau_{\gamma} (\frac{1}{N} \frac{\partial}{\partial
h}) \int_{P_h} f(x) dx \right) |_{h = 0} + O(N^{- \infty}),
\end{equation} where the sums involve various data associated to
the polytope $P$ and where $P_h$ is a parallel dilate of $P$.  We
refer to \cite{GS} for the notation. The two term expansion given
in Corollary \ref{EM}  was stated  in \cite{Sz}. It is
straightforward to generalize the formula and proof to the case
where  $f$ is a symbol as in \cite{GSW}, and to obtain remainder
estimates in the expansion.

 A significant difference between the Euler-MacLaurin  and
 the Bernstein methods for obtaining expansions of Dedekind-Riemann sums
 $ \sum_{\alpha \in \N^m: |\alpha|
\leq N} f(\frac{\alpha}{N})$  is that the Bernstein approaches
uses an arbitrary toric \kahler metric while the  Euler-MacLaurin
approach is metric independent. This reflects the fact that the
Bernstein approach is to integrate the pointwise expansion of
Theorem \ref{NUM}, which depends on the metric $h$. The metric
independence of the expansion in Corollary \ref{EM} is equivalent
to a sequence of integration by parts identities involving
curvature invariants. For instance, we obtain the second term in
the  expansion  in \S \ref{EMa} by using an integration by parts
identity on polytopes due to Donaldson \cite{D2}; see also \S
\ref{FS} for the simplest case. Conversely, comparision   of the
metric expansion in Theorem \ref{NUM} and the Euler-MacLaurin
expansion in  (\ref{EMG}) gives another proof of this identity,
and generates further identities in the lower order terms
 for any choice of toric hermitian metric.

The connection between Bernstein polynomials, Bergman kernels and
Berezin symbols appears to be new, and  one of the principal
motivations of this article is simply to point out the  toric
geometry underlying the classical Bernstein polynomials. But a
further motivation is that the generalized Bernstein polynomials
should be useful in the program of Yau-Tian-Donaldson of making
algebro-geometric (i.e. polynomial) approximations to
transcendental geometric objects on \kahler varieties (cf.
\cite{D1,T}). For instance, in
  \cite{SoZ,SoZ2} what we recognize in this article as Bernstein   polynomials were
  used to  approximate geodesic rays in $C^2$ (see also \cite{PS}).
 However,   the function
  $f$ in that paper also depended on $N$ in a subtle way and so
  the polynomials were much more complicated than the Bernstein
  polynomials of this article.  The  article
  \cite{Ho}  also concerns
relations between  Bernstein polynomials and Bergman kernels, but
mainly for the opposite purpose of deriving Bergman kernel
expansions on Reinhardt domains from classical Bernstein
polynomial expansions on the simplex. The exposition in \S
\ref{BKBE} was influenced by its analysis of Bernstein
polynomials. It also draws on some of the analysis of \cite{SoZ}.

In addition to the Bergman-toric generalization of Bernstein
polynomials, there also exists a  probabilistic generalization of
Bernstein polynomial which replaces ${N \choose \alpha}$ by the
weighted number of lattice paths from $0$ to $\alpha$ with steps
in the polytope $P$. This definition also  coincides with the
canonical one in the case of the Fubini-Study metrics on $\CP^m$
but in general gives a different class of polynomials defined on
the simplex of  probability measures on $\{1, \dots, m\}$. In the
case of the simplex $\Sigma_m = P$, both spaces are the same, but
in general they are not. The relevant analysis could be obtained
form \cite{TZ}; we  will not discuss these generalizations here.

We would like to thank H. Hezari for a careful reading of the
article and for pointing out some notational inconsistencies and
misprints  in an earlier version.

\section{\label{FS} Fubini-Study and classical Bernstein polynomials}

Let  us begin by explaining in more detail the Bernstein-Bergman
connection for the Fubini-Study metric in one complex dimension.
We recall that Bernstein polynomials of one variable give
canonical uniform polynomial approximations to continuous
functions  $f \in C([0,1])$:
\begin{equation} \label{1DBERNDEF} B_N(f)(x) = \sum_{j = 0}^N {N \choose j} f(\frac{j}{N}) x^j
(1 - x)^{N-j}.  \end{equation}  They have the special feature that
they simultaneously uniformly approximate all derivatives of $f$
if $f \in C^k$, i.e.  $B_N(f)^{(k)}(x) \to f^{(k)}(x)$ (cf.
\cite{L}),  and  if $f \in C^{\infty}$ there exists a complete
asymptotic expansion (\cite{Ho})
\begin{equation} \label{ASYMPTOTICS} B_N(f)(x) \sim \sum_{\mu = 0}^{\infty} L_{\mu}(x,
\frac{d}{dx}) f(x) N^{- \mu} \end{equation}  for certain
polynomial differential operators $L_{\mu}(x, \frac{d}{dx})$,
$$L_0 = 1, \; L_1 = \frac{1}{2}(x - x^2) \frac{d^2}{dx^2}, \;\;
L_2 = \frac{1}{6} (x - x^2) (1 - 2x) \frac{d^3}{dx^3} +
\frac{1}{8} (x - x^2)^2 \frac{d^4}{dx^4}. $$ In this case, $B_N(f)
= \frac{1}{N + 1} \ncal_N(f)$ (cf. Theorem  \ref{NUM}), and also
$${N \choose j} \int_0^1  x^j
(1 - x)^{N-j} dx = {N \choose j} \frac{j! (N - j)!}{(N + 1)!} =
\frac{1}{N + 1}. $$ Hence, (\ref{1DBERNDEF}) implies that
\begin{equation} \label{1DIM}\begin{array}{lll}  \int_0^1 \ncal_N(f)(x) dx & = &  \sum_{j = 0}^N f(\frac{j}{N})  \\ && \\
&& = (N + 1) \left( \int_0^1 f(x) dx + \frac{1}{2 N} \int_0^1 (x -
x^2) f''(x) dx + \cdots \right) \\ && \\
&& = (N + 1) \left( \int_0^1 f(x) dx + \frac{1}{2 N} \left( f(1) -
f(0) - 2 \int_0^1 f(x) dx \right)+ \cdots \right)
\\ && \\
&& = N \int_0^1 f(x) dx + \frac{1}{2 } (f(1) - f(0)) +
O(\frac{1}{N}).
\end{array} \end{equation}
We included the routine details to point out that obtaining the
first two terms of the Euler-MacLaurin Riemann sum expansion in
Theorem  \ref{EM} required two integrations by parts and
cancellations of $\int_0^1 f(x) dx$ in the constant term between
the subleading term of the dimension (Riemann-Roch) polynomial ($N
+ 1$) term and in the  $\int_0^1 L_1 f(x) dx$ term. Similar
cancellations occur in the general case (see the proof of Theorem
\ref{EM}).

We now relate the  Bernstein polynomials $B_N(f)$ on $[0, 1]$ to
the Bergman kernel for the Fubini-Study metric on $\CP^1$. The
discussion is almost the same for the $m$-simplex $\Sigma_m
\subset \R^m$ and the Bergman kernel for  the Fubini metric on
$\CP^m$, so we carry it out in all dimensions. We first need to
recall some standard facts about the Bergman or \szego kernels for
the Fubini-Study metric.

By the $m$-simplex we mean the convex set $\Sigma_m = \{(x_1,
\dots, x_m) \in \R_+^m: ||x|| : = \sum_{j = 1}^m x_j \leq 1\}. $
We denote its dilate by $N \in \N$ by $N \Sigma_m$.
 As discussed in \cite{STZ}
and elsewhere (see \cite{STZ} for references), the space
 $\poly(N \Si_m)$ of polynomials with exponents $\alpha \in N \Sigma_m$   can be identified
with the space of degree-$N$ homogeneous holomorphic polynomials
 in $m+1$ variables by
identifying  the (non-homogeneous) polynomial
$$f(z_1,\dots,z_m)= \sum _{|\al|\le N} c_\al
z^\al \qquad (z^\al=z_1^{\al_1} \cdots z_m^{\al_m})$$ with the
homogeneous polynomial
$$F(\zeta_0, \dots,
\zeta_m) = \sum_{|\al| \le N} c_\al
\zeta_0^{N-|\al|}\zeta_1^{\al_1} \cdots \zeta_m^{\al_m}\;.$$ The
space $\poly(N\Si_m)$ has a natural  $\lcal^2$ inner product,
\begin{equation}\label{IP}\langle f, \bar g \rangle = \frac 1{m!}\int _{S^{2m+1}}
F\overline{G} \,d\nu,\end{equation}

 This inner product is
equivalent to viewing $f, g$ as a holomorphic sections of the
$N$th power $\ocal(N)$ of the hyperplane  line bundle $\ocal(1)
\to \CP^m$ dual to the tautological line bundle.  The line bundle
$\ocal(1)$ carries a natural metric $h_\FS$ given by
\begin{equation}\label{hfs} \|s\|_{h_\FS}([w])=\frac{|(s,w)|}{|w|}\;,
\quad\quad w=(w_0,\dots,w_m)\in\C^{m+1}\;,\end{equation} for
$s\in\C^{m+1*}\equiv H^0(\C\PP^m,\ocal(1))$, where
$|w|^2=\sum_{j=0}^m |w_j|^2$ and $[w]\in\C\PP^m$ denotes the
complex line through $w$. The \kahler form on $\CP^m$ is the
Fubini-Study form
\begin{equation}
\omega_\FS=\frac{\sqrt{-1}}{2}\Theta_{h_\FS}=\frac{\sqrt{-1}}{2}
\ddbar \log |w|^2 \,.\end{equation} The natural Fubini-Study inner
product on sections is then
$$\langle s_1, s_2 \rangle = \int_{\CP^m} (s_1, s_2)_{h_{FS}}
\omega_{FS}^m/m!. $$ In an affine chart and local frame $e$,
sections have the form $f e$ where $f$ is a polynomial and the
inner product takes the explicit form
\begin{equation}  \langle f, \bar g \rangle =  \frac{1}{m!} \int
_{\C^m}\frac{f(z)\overline{g(z)}}{(1+\|z\|^2)^N}
\,\om_\FS^m(z),\quad f,g\in  \poly(N\Si_m).  \end{equation} Both
versions of the inner product generalize to any holomorphic line
bundle.

A basis for $\poly(N\Si_m)$ is given by  the monomials
$\chi_\al(z) = z_1^{\alpha_1} \cdots z_m^{\alpha_m}$, $|\al|\le
N$.  The monomials $\{ \chi_{\alpha}\}$ are orthogonal but not
normalized. Their $\lcal^2$ norms given by the inner product
(\ref{IP}) are:
\begin{equation}\label{IP2}\|\chi_\al\|
=\left[\frac{(N-|\al|)!\al_1!
\cdots\al_m!}{(N+m)!}\right]^\half\;.
\end{equation}   Thus,  an orthonormal basis for
$\poly(N\Si_m)$ is  given by the monomials
\begin{equation}\label{normchi} \frac{1}{\|\chi_\al\|}\,\chi_\al=
\left[\frac{(N+m)!}{(N-|\al|)!\al_1!\cdots\al_m!}\right]^\half
\chi_\al= \sqrt{\frac{(N+m)!}{N!} {N\choose \al}}\ \chi_\al \
,\qquad |\al|\le N\;.\end{equation} where
\begin{equation}\label{multinom}
{N\choose\al}= \frac{N!}{(N-|\al|)!\alpha_1!\cdots \alpha_m!}\;.
\end{equation}
 We let
$\wh\chi_\al^N:S^{2m+1}\to \C$ denote the homogenization of
$\chi_\al$:
\begin{equation}\label{chihat}\wh\chi_\al^N(x) =  x_0^{N-|\al|}x_1^{\al_1}\cdots
x_m^{\al_m} \;.\end{equation}

The Bergman or   \szego kernel $\Pi_{h_{FS}^N}$ for the
Fubini-Study metric is   the orthogonal projection to the space
$H^0(\CP^m, \ocal(N))$ of holomorphic sections with respect to the
inner produced induced by $h_{FS}$, which lifts to the
 orthogonal projection to $\poly(N\Si)$. It is thus given by
\begin{equation}\label{szego-proj}\Pi_{N}(x,y)
=\sum_{|\al|\le N}\frac{1}{\|\chi_\al\|^2}\wh \chi_\al(x)
\overline {\wh \chi_\al(y)}  =\frac{(N+m)!}{N!}\langle x,\bar
y\rangle^N \;,\end{equation} for $x,y\in S^{2m+1}$. In particular,
on the diagonal we have $\langle x, x \rangle = 1$ and
\begin{equation}\label{szego-projD}\Pi_{N}(x,x)
 =\frac{(N+m)!}{N!}.\end{equation}

In terms of the standard local affine frame on $\C^m$, we have
$\wh \chi^N_\al(z)= \frac{z^\al}{(1+\|z\|^2)^{p/2}}\;$, and hence
\begin{eqnarray}\Pi_{h_{FS}^N}(z,w)
=\frac{(N+m)!}{N!} \frac{\sum_{|\alpha|\le N }{N\choose\al}z^\al
\bar w^\al}{(1+\|z\|^2)^{N/2} (1+\|w\|^2)^{N/2}}
% \\ \\ =\frac{(N+m)!}{N!} \left[\frac{1+\langle z,\bar w\rangle}
%{(1+\|z\|^2)^{1/2} (1+\|w\|^2)^{1/2}}\right]^N\;.
\label{Sz} \end{eqnarray}

We now have the ingredients to identify Bernstein polynomials for
the simplex $N \Sigma_m$ in terms of the Fubini-Study
Bergman-\szego kernel. The  \kahler potential of the Fubini-Study
metric is  $\phi_{FS}= \log (1 + ||z||^2)$ where  $||z||^2 =
\sum_j |z_j|^2$, and its moment map is
$$\mu_{h_{FS}}(z) = (\frac{|z_1|^2}{1 + ||z||^2},\dots,
\frac{|z_m|^2}{(1 + ||z||^2}). $$  The Fubini-Study  symplectic
potential is the convex function on $\Sigma_m$ given by the
Legendre transform of $\phi_{FS}$ in logarithm coordinates,
$$u_0(x) = \sum_{j= 1}^m x_j \log x_j + (1 - ||x||) \log (1 -
||x||)$$ where $||x|| = \sum_{j = 1}^m x_j. $  A simple
calculation shows that  the Bernstein terms may be expressed in
terms of the symplectic potential as
\begin{equation} \label{NCHOOSE} {N \choose \alpha} x^{\alpha} (1
- ||x||)^{N - |\alpha|} = \frac{N!}{(N+m)!} \frac{e^{N \left(
u_0(x) + \langle \frac{\alpha}{N} - x, \nabla u_0(x) \rangle
\right)}}{||z^{\alpha}||^2_{h_{FS}^N}} .
\end{equation} It follows that \begin{equation} \label{FSa}
\begin{array}{lll} B_N(f)(x) && = \frac{1}{\Pi_{h_{FS}^N}(z, z)} \sum_{\alpha = 0}^N
f(\frac{\alpha}{N}) \;  \frac{e^{N \left( u_0(x) + \langle
 \frac{\alpha}{N} - x, \nabla u_0(x) \rangle
\right)}}{||z^{\alpha}||^2_{h_{FS}^N}}, \;\; z =
\mu_{h_{FS}}^{-1}(x).
\end{array} \end{equation} On the other hand, one can also express
the Bergman-\szego kernel in terms of the symplectic potential at
the points $(e^{i \theta}z, z)$ as
\begin{equation} \label{BERNBERGFS} \begin{array}{lll} \Pi_{h_{FS}^N}(e^{i \theta} z,z) & = &  \sum_{\alpha =
0}^N e^{i \theta \alpha}
 \frac{e^{N \left( u_0(x) +
\langle \frac{\alpha}{N} - x, \nabla u_0(x) \rangle
\right)}}{||z^{\alpha}||^2_{h_{FS}^N}} \\ && \\
& = &    \Pi_{h_{FS}^N}(z,  z) \sum_{\alpha = 0}^N  {N \choose
\alpha} e^{i \theta \alpha} x^{\alpha} (1 - ||x||)^{N - |\alpha|}.
\end{array} \end{equation}
Indeed,  comparing (\ref{Sz}) and (\ref{BERNBERGFS}), we see that
the two expressions for the Bergman-\szego kernel agree as long as
\begin{equation} \label{AGREE} |z^{\alpha}|^2 e^{- N \log (1 + ||z||^2)}  = e^{N \left(
u_0(x) + \langle \frac{\alpha}{N} - x, \nabla u_0(x) \rangle
\right)}, \;\; \mbox{when} \; \mu_{h_{FS}}(z) = x,
\end{equation}
and this follows from the pair of identities,
$$|z^{\alpha}|^2 = e^{\langle \alpha, \nabla u_0(x) \rangle}, \;\;
\log (1 + |z|^2) =  \langle x, \nabla u_0(x) \rangle - u_0(x)\;\;
\mbox{when} \; \mu_{h_{FS}}(z) = x.
$$ On the open orbit, we may use
logarithmic coordinates $z = e^{\rho/2 + i \theta}$. Then $\rho =
\nabla u_0(x)$ and the identities are  equivalent to the fact that
the \kahler potential and symplectic potential are Legendre
transforms of each other. Since both sides of (\ref{BERNBERGFS})
are continuous, the equality extends to all of $M$ and $\bar{P}$.

Applying the operator $f(\frac{D_{\theta}}{N})$ just replaces
$e^{i \theta \alpha}$ by $f(\frac{\alpha}{N})$. Then, dividing by
$\Pi_{h_{FS}^N}(z, z)$ gives (\ref{FSa}) and (\ref{BN}). Together
with the
 formulae above for norms of monomials and the \szego kernel in
 dimension $m$, the formula (\ref{NDBERNDEF}) also reduces to
 (\ref{FSa}).

\section{\label{DEF}Definition of the generalized  Bernstein polynomials}

We now generalize the definition of Bernstein polynomial to any
polarized toric \kahler variety, and generalize the calculations
of the previous section.

We recall that a toric \kahler manifold is a \kahler  manifold
$(M, J, \omega)$ on which the  complex torus $(\C^*)^m$ acts
holomorphically with an open orbit $M^o$.  We  assume that $M$ is
projective and that  $P$  is a Delzant polytope,  i.e. a convex
integral polytope in $\R^m$ with the property that each vertex is
contained in exactly $m$ facets, and the normals to the $m$ facets
at each vertex form a $\Z$-basis for a lattice $\Gamma \subset
\R^m$ so that $\T = \R^m/\Gamma$ is the torus acting on $M_P$. The
convex polytope  $P$ is defined by a set of inequalities of
\begin{equation} \label{ELLDEF} \langle x, v_r\rangle\geq \lambda_r, ~~~r=1, ..., d, \end{equation}  where
$v_r$ is a primitive element of the lattice and inward-pointing
normal to the $r$-th $(n-1)$-dimensional face of $P$.

We denote by $\T = (S^1)^m$ the real torus underlying $(\C^*)^m$.
By a toric \kahler metric we mean a \kahler metric $\omega$
invariant under $\T$. We assume that $\frac{1}{\pi}\om$ is a de
Rham representative of the Chern class $c_1(L)\in H^2(M,\R)$ of an
invariant holomorphic line bundle  $L \to M$.  We  let $h$ denote
the Hermitian metric on $L$ inducing the Chern connection with
curvature $(1,1)$ form $\omega_h = \omega$. Here, given a
Hermitian metric $h$,
\begin{equation}\label{curvature}\omega_h= - \frac{\sqrt{-1}}{2} \ddbar
\log\|e_L\|_h^2\;,\end{equation} where $e_L$ denotes a local
holomorphic frame (i.e. a  nonvanishing section) of $L$ over an
open set $U\subset M$, and $\|e_L\|_h=h(e_L,e_L)^{1/2}$ denotes
the $h$-norm of $e_L$. We often write $\omega $ for $\omega_h$
when the metric is fixed.

 Now fix a  basepoint $m_0$ on the open orbit  and
identify  $M^o \equiv (\C^{*})^{m}$, endowing $M^o$ with the
logarithmic coordinates
$$
z=e^{\rho/2 +i\varphi} \in (\C^{*})^{m},\quad \rho, \varphi \in
\R^{m}.$$  Over the open orbit, $\omega$ has a \kahler potential,
i.e. $\omega = - 2i \ddbar \phi(z)$. The associated Hermitian
metric then has the form $h = e^{- \phi}$. Invariance under the
real torus action implies that $\phi$ only depends on the
$\rho$-variables, hence,
$$\omega = \frac{i}{2} \sum_{j, k} \frac{\partial^2 \phi}{\partial
\rho_k \rho_j} dz_j \wedge d\bar{z}_k.$$  We sometimes subscript
$\omega$ to indicate the associated  hermitian metric or \kahler
potential, e.g.  $\omega= \omega_h= \omega_{\phi}$. By a slight
abuse of notation, we denote the \kahler potential in the
logarithmic coordinates by $\phi(\rho)$. Positivity of $\omega$
implies that $\phi$ is strictly convex of $\rho \in \R^n$.

The real torus $\T$ acts on $(M, \omega)$ in  a Hamiltonian
fashion with respect to $\omega$, and its moment map $\mu_{\phi} =
\mu_h$  with respect to $\omega_{\phi} = \omega_h$ is defined by
\begin{equation} \label{MMDEF} \mu_{h} (z_1, \dots, z_m) =
\nabla_{\rho} \phi(\rho_1, \dots, \rho_m), \;\;\; (z = e^{\rho/2 +
i \theta}).
\end{equation} The {\it symplectic potential} $u_{\phi}$
associated to the \kahler potential
 is defined to be the
Legendre-dual of $\phi$, defined as follows: for $x \in P$ there
is a unique $\rho$ such that $\mu_{\phi}(e^{\rho/2}) =
\nabla_{\rho} \phi = x$. Then the Legendre transform is defined to
be the convex function
\begin{equation} \label{SYMPOTDEF} u_{\phi}(x) = \langle x, \rho \rangle -
\phi(\rho), \;\;\; e^{\rho/2} = \mu_{\phi}^{-1}(x)
\end{equation}  on $P$.

There exists a `canonical' \kahler metric and symplectic
potential, defined as follows: Let $l_r: \mathbf{R}^n\rightarrow
\mathbf{R}$ be the affine functions,
$$\ell_r(x)=\langle x, v_r\rangle-\lambda_r.$$
Then the  canonical symplectic potential is defined by
\begin{equation} \label{CANSYMPOT} u_0(x) = \sum_k \ell_k(x) \log
\ell_k(x),
\end{equation}
which in turn  corresponds to a canoncial \kahler potential
\cite{G, A}.  Every symplectic potential  has the same
singularities on the boundary $\partial P$ as the  canonical
symplectic potential.

We denote by $G_{\phi} = \nabla^2_x u_{\phi}$ the Hessian of the
symplectic potential. It has simple poles on $\partial P$.  We
also denote by $H_{\phi}(\rho) = \nabla^2_{\rho} \phi(e^{\rho/2})$
the Hessian of the \kahler potential on the open orbit in $\rho$
coordinates. By Legendre duality, \begin{equation} \label{GINV}
H_{\phi}(\rho) = G_{\phi}^{-1}(x), \;\; \mu_{\phi} (e^{\rho/2}) =
x.
\end{equation}

We now let $(L, h) \to M$ denote the invariant Hermitian  line
bundle with curvature $\omega_h = \omega$.
 A natural basis of the
space of holomorphic sections $H^0(M, L^N)$ associated to the
$N$th power of $L \to M$ corresponds to monomials $z^{\alpha}$
where $\alpha$ is a lattice point in the $N$th dilate of the
polytope, $\alpha \in NP \cap \Z^m.$ The hermitian metric $h$ on
$L$ induces inner products $Hilb_N(h)$ on $H^0(M, L^N)$, defined
by
$$\langle s_1, s_2 \rangle_{h^N} = \int_M (s_1(z), s_2(z))_{h^N}
\frac{\omega_h^m}{m!}. $$ The monomials are orthogonal with
respect to any such toric inner product and have the norm-squares
\begin{equation} \label{QFORM} Q_{h^N}(\alpha) = \int_{\C^m} |z^{\alpha}|^2 e^{-
N \phi(z)} dV_{\phi}(z), \end{equation} where $dV_{\phi} = (i
\ddbar \phi)^m/ m!$. In terms of the symplectic potential,
\begin{equation} \label{SPNORM} Q_{h^N}(\alpha) = \int_P e^{ N
(u_{\phi}(x) + \langle \frac{\alpha}{N} - x, \nabla u_{\phi}(x)
\rangle} dx. \end{equation}

The Bergman-\szego kernels for this hermitian metric are the
orthogonal projections with respect to $Hilb_N(h)$ to $H^0(M,
L^N)$. If we denote the sections corresponding to the monomials by
$S_{\alpha}$ then,
$$\Pi_{h^N}(z,w) = \sum_{\alpha \in N P} \frac{S_{\alpha}(z)
\otimes S_{\alpha}(w)^*}{Q_{h^N}(\alpha)}. $$

The following definition generalizes the formula of (\ref{FS}) to
any toric \kahler manifold.

\begin{defin} \label{BB} Let $f \in C(\bar{P})$. The $N$th  normalized (Bergman-)Bernstein
polynomial approximation  to $f$ with respect to the hermitian
metric $h$ on $L \to M$ is defined by
$$\begin{array}{lll}  B_{h^N} f(x) && =
  \frac{1}{\Pi_N(z,z)} \ncal_{h^N} f(x),\;\; \mbox{where} \\ && \\
   \ncal_{h^N} f(x)  &&=  \sum_{\alpha \in NP}
 f(\frac{\alpha}{N}) \frac{ e^{N
  \left(u_{\phi} (x) + \langle
\frac{\alpha}{N} - x, \nabla u_{\phi}(x) \rangle \right)}
}{Q_{h^N}(\alpha)}
 . \end{array}$$
\end{defin}

 As in the
classical case, Bernstein polynomials are closely related to
certain probability measures on $\bar{P}$. We  define
\begin{equation}
\label{MUN} \mu_N^z: = \sum_{\alpha \in NP} \frac{
\pcal_{h^N}(\alpha, z)}{\Pi_N(z,z)} \delta_{\frac{\alpha}{N}},
\end{equation}
 where $\pcal_{h^N}(\alpha, z)$ denote the
 Fourier coefficients of the Bergman kernel with respect to the
 $\T$,
\begin{equation} \label{PHK} \pcal_{h^N}(\alpha, z): =
\frac{|z^{\alpha}|^2 e^{- N \phi(z)}}{Q_{h^N}(\alpha)}.
\end{equation}

\begin{prop} \label{ID} Let $f \in C(\bar{P})$ and let $x = \mu_{\phi}(z)$ and
let $h = e^{- \phi}$.  Then,
 $$ \begin{array}{lll} B_{h^N}
  f(x) &= & \int_P f(y) d\mu^z_N(y) \\ && \\
  & = &   \sum_{\alpha \in NP}
 f(\frac{\alpha}{N}) \frac{ \pcal_{h^N}(\alpha, z)}{\Pi_N(z,z)},  \\ && \\
  & = &  \frac{1}{\Pi_N(z,z)} \sum_{\alpha \in NP}
 f(\frac{\alpha}{N}) \frac{ e^{N (u_{\phi} (x) +
\langle \frac{\alpha}{N} - x, \log \mu_{\phi}^{-1}(x )\rangle}
}{Q_{h^N}(\alpha)}
 .\end{array}$$
\end{prop}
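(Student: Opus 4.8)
The plan is to unravel the three displayed expressions in Proposition~\ref{ID} and verify that each equals the next, the first equality being essentially the definition of the measure $\mu_N^z$ and the remaining ones being rewritings of the Fourier coefficients $\pcal_{h^N}(\alpha,z)$ via Legendre duality. First I would observe that by the definition \eqref{MUN} of $\mu_N^z$, integrating a continuous $f$ against it gives
$$ \int_P f(y)\, d\mu_N^z(y) = \sum_{\alpha \in NP} f\!\left(\tfrac{\alpha}{N}\right) \frac{\pcal_{h^N}(\alpha,z)}{\Pi_N(z,z)}, $$
which is precisely the second line; so the first equality is immediate once we know $f$ is $\mu_N^z$-integrable, and since $\mu_N^z$ is a finite sum of point masses this is automatic for $f \in C(\bar P)$. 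This also shows $B_{h^N}f(x) = \int_P f\, d\mu_N^z$ provided we identify $\pcal_{h^N}(\alpha,z)/\Pi_N(z,z)$ with the summand appearing in Definition~\ref{BB}, which is the content of the third equality.

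For the passage from the second line to the third, the key step is the identity
$$ \pcal_{h^N}(\alpha,z) = \frac{|z^\alpha|^2 e^{-N\phi(z)}}{Q_{h^N}(\alpha)} = \frac{e^{N\left(u_\phi(x) + \langle \frac{\alpha}{N} - x,\, \nabla u_\phi(x)\rangle\right)}}{Q_{h^N}(\alpha)}, \qquad x = \mu_\phi(z), $$
i.e. one must show $|z^\alpha|^2 e^{-N\phi(z)} = e^{N(u_\phi(x) + \langle \alpha/N - x, \nabla u_\phi(x)\rangle)}$ whenever $\mu_\phi(z)=x$. Working on the open orbit in logarithmic coordinates $z = e^{\rho/2 + i\varphi}$, we have $|z^\alpha|^2 = e^{\langle \alpha,\rho\rangle}$ and, by \eqref{MMDEF}, $\rho = \nabla u_\phi(x)$ since $\nabla_\rho\phi(\rho) = x$ forces $\rho = \mu_\phi^{-1}$ applied to $x$ and the gradients of Legendre-dual functions are mutually inverse. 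Then by the definition \eqref{SYMPOTDEF} of the symplectic potential, $\phi(\rho) = \langle x,\rho\rangle - u_\phi(x) = \langle x, \nabla u_\phi(x)\rangle - u_\phi(x)$. Substituting,
$$ \langle \alpha,\rho\rangle - N\phi(\rho) = \langle \alpha, \nabla u_\phi(x)\rangle - N\langle x,\nabla u_\phi(x)\rangle + N u_\phi(x) = N\left(u_\phi(x) + \langle \tfrac{\alpha}{N} - x,\, \nabla u_\phi(x)\rangle\right), $$
which is exactly the claimed identity; note the last line of the Proposition writes $\nabla u_\phi(x) = \log \mu_\phi^{-1}(x)$, consistent with $\rho = \log|z|^2$ componentwise, i.e. $\mu_\phi^{-1}(x) = e^{\rho/2}$. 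Dividing by $Q_{h^N}(\alpha)$ and then by $\Pi_N(z,z)$ and summing against $f(\alpha/N)$ yields the third line, which by Definition~\ref{BB} is $B_{h^N}f(x)$.

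The only genuine subtlety, and the step I expect to require the most care, is that the identities above are derived on the open orbit $M^o$ where logarithmic coordinates and the strict convexity of $\phi$ are available, whereas the points $x \in \partial P$ correspond to the toric divisors where $u_\phi$ and $\nabla u_\phi$ blow up. One must check that all three expressions extend continuously to $\bar P$ (equivalently, all of $M$): the products $e^{N(u_\phi(x) + \langle \alpha/N - x, \nabla u_\phi(x)\rangle)}$ remain bounded and continuous up to the boundary because, as recalled after \eqref{CANSYMPOT}, every symplectic potential has the same boundary singularities as $u_0$, and for $u_0$ the exponent collapses to $\log$ of the explicit monomial $\prod_k \ell_k(x)^{N\ell_k(\alpha/N)/\cdots}$-type factor which vanishes to finite order but does not diverge; alternatively one invokes the already-established continuity argument used at the end of \S\ref{FS} for \eqref{BERNBERGFS}. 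Since $\Pi_N(z,z) > 0$ everywhere and $Q_{h^N}(\alpha) > 0$ are constants, the quotients are continuous on $\bar P$, and equality of continuous functions on the dense open set $\mu_\phi(M^o)$ forces equality on $\bar P$. This completes the proof.
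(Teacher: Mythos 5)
Your proposal is correct and follows essentially the same route as the paper: the first two equalities are read off from the definitions of $\mu_N^z$ and $\pcal_{h^N}$, and the third is reduced to the Legendre-duality identity $|z^\alpha|^2 e^{-N\phi(z)} = e^{N(u_\phi(x) + \langle \alpha/N - x, \nabla u_\phi(x)\rangle)}$, which you prove (as the paper does) by showing $\nabla u_\phi(x)=\rho$ and using $\phi(\rho)+u_\phi(x)=\langle x,\rho\rangle$ on the open orbit, then extending to $\bar P$ by continuity. Your parenthetical about the factor of $2$ between $e^{\rho/2}=\mu_\phi^{-1}(x)$ in the paper's (\ref{SYMPOTDEF}) and the notation $\log\mu_\phi^{-1}(x)=\rho$ is an accurate observation of a small notational inconsistency in the source, not a gap in your argument.
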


\begin{proof} The first two equalities are obvious from the
definition. The third equality generalizes the identity
(\ref{AGREE}):
\begin{equation} \label{AGREEa} |z^{\alpha}|^2 e^{- N \phi(z)} = e^{N
  (u_{\phi} (x) + \langle
\frac{\alpha}{N} - x, \log \mu_{\phi}^{-1}(x) \rangle)}, \;\;
\mbox{when}\;\; \mu_{\phi}(z) = x. \end{equation} As in the case
of the Fubini-Study metric, the identity splits into two
identities on the open orbit,
\begin{equation} \label{TWOIDS} |z^{\alpha}|^2 = e^{\langle
\alpha, \rho \rangle}, \;\;\; e^{- N \phi(z)} = e^{N
  (u_{\phi} (x) - \langle x, \log \mu_{\phi}^{-1}(x) \rangle)}.
  \end{equation}
The first follows from the fact that
 \begin{equation} \nabla_x u_{\phi}(x) = \log \mu_{\phi}^{-1} (x) =
 \rho,
 \end{equation}
since  by (\ref{SYMPOTDEF}),
 $
 \nabla_x u_{\phi}(x) = \rho + \langle x, \nabla_x \rho \rangle -
 \langle \nabla \phi(\rho), \nabla_x \rho \rangle = \rho, $
as $\nabla \phi(\rho) = x. $ The second then follows from the fact
that $\phi(\rho)$ and $u_{\phi}(x)$ are Legendre duals. The
identity of the Proposition then extends by continuity to the
closure.

\end{proof}

As a simple corollary, we obtain one of the standard properties of
Bernstein polynomials. 

\begin{cor} Let $f \in C(\bar{P})$. Then $\min_{\bar{P}} f \leq
B_N(f)(x) \leq \max_{\bar{P}} f$. \end{cor}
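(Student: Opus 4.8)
The plan is to exhibit $B_{h^N}f(x)$ as the average of $f$ against an honest probability measure supported in $\bar P$; the two-sided bound then follows at once. (Throughout I write $B_{h^N}$ for what the statement abbreviates as $B_N$, the metric $h$ being suppressed.)

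First I would apply Proposition \ref{ID}: for $x = \mu_\phi(z)$,
$$ B_{h^N}f(x) \;=\; \int_{\bar P} f\, d\mu^z_N \;=\; \sum_{\alpha \in NP} f\!\left(\tfrac{\alpha}{N}\right)\,\frac{\pcal_{h^N}(\alpha,z)}{\Pi_N(z,z)}\, , $$
so it suffices to check that the measure $\mu^z_N$ of \eqref{MUN} is a probability measure on $\bar P$. It is a finite sum of point masses at the points $\alpha/N$, $\alpha \in NP$, each of which lies in $\bar P$, so $\supp \mu^z_N \subset \bar P$. Its weights are non-negative: by \eqref{PHK} and \eqref{QFORM}, $\pcal_{h^N}(\alpha,z) = |z^{\alpha}|^2 e^{-N\phi(z)}/Q_{h^N}(\alpha)$ with $Q_{h^N}(\alpha) > 0$ (the integrand is positive on the full-measure open orbit), and $\Pi_N(z,z) > 0$ as a sum of such positive terms; hence $\pcal_{h^N}(\alpha,z)/\Pi_N(z,z) \ge 0$.

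The only point that needs a word is that the weights sum to $1$. Here I would use that in the local frame $e_L^{\otimes N}$, for which $\|e_L^{\otimes N}\|_{h^N}^2 = e^{-N\phi}$, the section $S_\alpha$ is represented by the monomial $z^\alpha$, so that $\|S_\alpha(z)\|^2_{h^N} = |z^{\alpha}|^2 e^{-N\phi(z)}$; substituting this into the defining series $\Pi_{h^N}(z,w) = \sum_{\alpha\in NP} S_\alpha(z)\otimes S_\alpha(w)^*/Q_{h^N}(\alpha)$ and setting $w=z$ gives
$$ \Pi_N(z,z) \;=\; \sum_{\alpha \in NP} \frac{|z^{\alpha}|^2 e^{-N\phi(z)}}{Q_{h^N}(\alpha)} \;=\; \sum_{\alpha \in NP} \pcal_{h^N}(\alpha,z)\, , $$
whence $\sum_{\alpha\in NP}\pcal_{h^N}(\alpha,z)/\Pi_N(z,z) = 1$. (Equivalently, apply the last formula of Proposition \ref{ID} to $f\equiv 1$ and invoke \eqref{AGREEa}; the resulting sum is the one just displayed.) Thus $\mu^z_N$ is a probability measure on $\bar P$, and since $\min_{\bar P}f \le f(\alpha/N) \le \max_{\bar P}f$ for every $\alpha\in NP$, integrating against $\mu^z_N$ yields $\min_{\bar P}f \le B_{h^N}f(x) \le \max_{\bar P}f$ for all $x\in\bar P$. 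There is no real obstacle in this argument: the only non-formal ingredient is the identity $\Pi_N(z,z) = \sum_\alpha \pcal_{h^N}(\alpha,z)$, which is merely the diagonal restriction of the orthonormal-basis expansion of the Bergman kernel and has already been used implicitly in Definition \ref{BB} and Proposition \ref{ID}.
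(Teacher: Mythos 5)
Your proof is correct and takes essentially the same route as the paper: the corollary is stated immediately after Proposition \ref{ID} precisely because that proposition exhibits $B_{h^N}f(x)$ as $\int_{\bar P} f\,d\mu^z_N$ for the probability measure $\mu^z_N$, from which the two-sided bound is immediate. You have simply spelled out the verification that $\mu^z_N$ is a probability measure (nonnegative weights supported in $\bar P$, summing to $1$ via the diagonal Bergman kernel identity), which the paper leaves implicit as ``a simple corollary.''
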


Let us calculate explicitly the numerator polynomials for the
canonical symplectic potential (\ref{CANSYMPOT}) or \kahler form.
 We have,
$$\nabla u_0(x) = \sum_k (\log \ell_k) v_k + \bar{v}, \;\; \bar{v}
= \sum_k v_k. $$ Hence,
$$\langle \frac{\alpha}{N} - x, \nabla u_0 (x)  \rangle = \sum_k
\langle \frac{\alpha}{N} - x, v_k \rangle \log \ell_k + \langle
\frac{\alpha}{N} - x, \bar{v} \rangle, $$ and
$$\begin{array}{lll} e^{N
  \left(u_0(x) + \langle
\frac{\alpha}{N} - x, \nabla u_0 (x) \rangle \right)} & = &
e^{\langle \alpha - N x, \bar{v} \rangle}  \Pi_k (\ell_k(x))^{N
\ell_k(x) + \langle \alpha - N x, v_k \rangle} \\ && \\
& = & e^{\langle \alpha - N x, \bar{v} \rangle}  \Pi_k
(\ell_k(x))^{- N \lambda_k + \langle \alpha, v_k \rangle}
\end{array}
$$
where in the last line we use that $ \ell_k(x) - \langle x,
v_k\rangle = - \lambda_k$. Hence, the numerator of the canonical
Bernstein polynomial may be rewritten as
\begin{equation} \label{NUMERATOR} \begin{array}{lll}  \ncal_{h^N} f(x) && =
\sum_{\alpha \in NP}
 f(\frac{\alpha}{N}) \frac{1}{Q_{h_{can}^N}(\alpha)} e^{\langle \alpha - N x, \bar{v} \rangle}  \Pi_k
(\ell_k(x))^{- N \lambda_k + \langle \alpha, v_k \rangle},
\end{array} \end{equation}
which closely resembles the classical cases (where also $\bar{v} =
0$). Here, $$Q_{h_{can}^N}(\alpha) = \int_P e^{N (u_0(x) + \langle
\frac{\alpha}{k} - x, \nabla u_0(x) \rangle} dx
$$ is the norming constant with respect to the canonical
symplectic potential.

In general, the symplectic potential has the form
\begin{equation} \label{CANSYMPOTa} u_{\phi}(x) = u_0(x) + g_{\phi}(x) =  \sum_k \ell_k(x) \log
\ell_k(x) + g_{\phi}(x),
\end{equation}
where  $g_{\phi} \in C^{\infty}(\bar{P})$ is smooth up the
boundary \cite{G,A,D2}. Hence the $\alpha$ term gets multiplied by
the additional factor
$$ e^{N
  \left(g_{\phi} (x) + \langle
\frac{\alpha}{k} - x, \nabla g_{\phi} (x) \rangle \right)}. $$

In Definition \ref{BBa}, Bernstein polynomials were normalized by
 dividing by $\Pi_{h^N}(z,z)$. It follows that
 $B_{h^N}(1) \equiv 1$ as for classical Bernstein polynomials.  However, in
  the classical cases, $\Pi_{h^N}(z,z)$ is constant so the normalization
  of the polynomial is still a polynomial.  In general, however, $\Pi_{h^N}(z,z)$
  is not constant on the diagonal and therefore the quotient is
  rarely a polynomial in the usual sense.  In special cases, Bernstein polynomials are
polynomials in the variables $x = \mu(z)$, but  this depends on
the properties of the moment map of the \kahler metric.

 One might prefer to
 normalize by dividing  the numerator polynomial by the  dimension polynomial $d_N = \dim
 H^0(M, L^N)$,
 \begin{equation} \label{NORMAL2} \begin{array}{l}  \hat{B}_{h^N} f(x) = \frac{1}{d_N} \ncal_{h^N} f(x).
  \end{array} \end{equation} For the canonical metric, one would
 then
 have the {\it canonical Bernstein polynomials}
 \begin{equation} \label{NUMERATOR} \begin{array}{lll}   \hat{B}_{h^N} f(x) && =
\frac{1}{d_N} \sum_{\alpha \in NP}
 f(\frac{\alpha}{N}) \frac{1}{Q_{h^N}(\alpha)} e^{\langle \alpha - N x, \bar{v} \rangle}  \Pi_k
(\ell_k(x))^{- N \lambda_k + \langle \alpha, v_k \rangle},
\end{array} \end{equation}
which are visibly polynomials when $\bar{v} = 0$.
 However, this is essentially an aesthetic decision based on
how seriously one wants to take the term `polynomial'.  Either
definition has the same value in terms of making approximations.
Our view is that the term `polynomial' should have the same
general sense in the  \kahler context as `algebro-geometric'
approximations do  in the Yau-Tian-Donaldson program.

To our knowledge the only previously studied cases are the
Bernstein polynomials for the simplex (\ref{NDBERNDEF}) or cube,
$$B_N(f)(x) = \sum_{0 \leq i_1, \dots, i_d \leq N}
f(\frac{i_1}{N}, \dots, \frac{i_d}{N}) \Pi_{k = 1}^d {N \choose
i_k} x_k^{i_k} (1 - x_k)^{N - i_k}. $$ Here, $(x_1, \dots, x_d)
\in [0, 1]^d$. The classical Bernstein polynomials $B_N(f)$ are
distinguished among other polynomial approximations by
simultaneously approximating the derivatives and also by
preserving certain shape and convexity properties (at least, in
dimension one). It might be interesting to explore the shape
preserving properties of general Bernstein polynomials. We also
note that Bernstein polynomials admit holomorphic extensions to
complex neighborhoods of the polytope $P$ when the symplectic
potential function $g_{\phi}$ is real analytic.

\section{\label{BPTO} Bernstein polynomials, Toeplitz operators and Berezin
symbols}

In this section, we prove formula  (\ref{BN}) and also establish
some basic properties of Bernstein polynomials.

 The proof of (\ref{BN})  is simply a matter of
unwinding the definitions. The Bergman kernel is a section of the
bundle $(L^N) \otimes (L^N)^* \to M \times M$. It is simpler to
deal with scalar kernels, and so we lift the Bergman kernel to a
kernel $\hat{\Pi}_N(x,y)$ on the unit circle bundle $X \to M$ with
respect to $h$ in the dual line bundle $L^*$. In other words, $X =
\partial D^*_h $ is the boundary of the unit disc bundle with respect to $h$  in the dual line
bundle $L^*$. We use local product coordinates $x = (z, \theta)
\in M \times S^1$ on $X$
 where $x = e^{i \theta}
\frac{e(z)}{||e(z)||}$ in terms of a local holomorphic frame
$e(z)$ for $L$.  When working on $M$ we tacitly use the
representative of $\Pi_{h^N}$ relative to the frame $e(z)^N$ of
$L^N$. For the sake of brevity, we will not review the definitions
but refer to \cite{STZ} for the relevant background.

The space $H^0(M, L^N)$ is naturally isomorphic to the space
$H^2_N(X)$ of CR holomorphic functions transforming by $e^{i N
\theta}$ under the $S^1$ action of the circle bundle $X \to M$. We
denote by $s \to \hat{s}$ the lift of a section to an equivariant
CR function and by  $\hat{\Pi}_{h^N}(x,y)$ the lifted \szego
kernel, i.e. the orthogonal projection from $L^2(X) \to H^2_N(X)$.
The monomial sections $s_{\alpha}$ which equal $z^{\alpha}$ on the
open orbit lift to equivariant functions $\hat{s}_{\alpha}$ on
$X$.

By the standard linearization of geometric quantization (reviewed
in this context in \cite{STZ}), the  $\T$ action lifts to $X$ as
contact transformations of the Chern connection form associated to
$h$. For the sake of completeness, let us recall the lift of the
torus action to $H^2_N(X)$, and its linearization on $H^0(M,
L^N)$: The generators $\frac{\partial}{\partial \theta_j}$ of the
$\T$ action on $M$ lift to contact vector fields
$\Xi_1,\dots\Xi_m$ on $X$. There is a natural contact 1-form
$\alpha$ on $X$ defined by the  Hermitian connection 1-form, which
satisfies $d \alpha = \pi^* \omega$.  The horizontal lifts of the
Hamilton vector fields $\xi_j$ are then defined by
$$\pi_* \xi^h_{j} = \xi_j,\;\;\; \alpha(\xi^h_j) = 0,$$
and the contact vector fields $\Xi_j$ are given by:
\begin{equation*} \Xi_j = \xi^h_j + 2 \pi i \langle
\mu \circ\pi, \xi_j^* \rangle \frac{\partial}{\partial \theta}
=\xi^h_j + 2\pi i (\mu \circ\pi)_j\, \frac{\partial}{\partial
\theta},
\end{equation*}
where $\mu$ is the moment map corresponding to $h$,  and where
$\xi_j^* \in \R^m$ is the element of the Lie algebra of $\T$ which
acts as $\xi_j$ on $M$.

 It follows that the vector fields act as differential operators
 on the CR Hardy spaces,  $ \Xi_j:H^2_N(X) \to H^2_N(X)$ satisfying
\begin{equation}\label{Xi}(\Xi_j \hat S)
(\zeta)= \frac{\d}{\d\phi_j}  \hat
S(e^{i\phi}\cdot\zeta)|_{\phi=0}\;,\quad \hat S\in
\ccal^\infty(X_P^c)\;.
\end{equation}
Furthermore, the generator of the $S^1$ action acts on these
spaces and
\begin{equation}\label{dtheta} \frac{\d}{\d\theta} :\hcal^2_N(X_P^c) \to
\hcal^2_N(X_P^c)\;,\qquad \frac{1}{i}\frac{\d}{\d\theta}\hat s_N =
N\hat s_N \quad \mbox{for }\ \hat s_N\in
\hcal^2_N(X_P^c)\;.\end{equation}

Since by (\ref{Xi}), the operators $\Xi_j$ act by translating
functions by the $\T$ action lifted to $X$, we henceforth denote
$\frac{1}{i} \Xi_j$   by $D_{\theta_j}$. Then for    $1\le j\le
m$, the lifted monomials $\hat\chi_{\alpha} \in H^2_N(X)$ are
joint eigenfunctions of these commuting operators,
$$D_{\theta_j} \hat\chi_{\alpha} = \alpha_j \hat\chi_{\alpha}, \;\; \forall \alpha \in NP.$$
The dilation $P \to NP$ is best viewed in terms of constructing a
conic set of eigenvalues in one higher dimension by adding the
operator
\begin{equation}
\hat{I}_{m+1}=\frac{p}{i}\frac{\partial}{\partial \theta}
-\sum_{j=1}^{m} D_{\theta_j}. \label{Qtorus}
\end{equation}
The monomials $\wh{\chi}_{\wh{\alpha}}$ are then the  joint
eigenfunctions of these $(m + 1)$ commuting operators  and  we
define the `homogenization' $\wh{NP} \subset \mathbb{Z}^{m+1}$ of
the lattice points in the polytope $NP$ to be the set of all
lattice point $\wh{\alpha}^{N}$ of the form
\begin{equation}
\wh{\alpha}^{N}=\wh{\alpha}:=(\alpha_{1},\ldots,\alpha_{m},N
-|\alpha|),\quad \alpha=(\alpha_{1},\ldots,\alpha_{m}) \in NP \cap
\mathbb{Z}^{m},
\end{equation}

 Given
$f \in C^{\infty}(\R^m)$, we now  define $f(D_{\theta})$ on
$L^2(X)$ by the spectral theorem for $m$ commuting operators, i.e.
$$f(D_{\theta}) = \int_{\R^m} \hat{f}(\xi) e^{i \langle \xi,
D_{\theta} \rangle } d\xi, \;\; \mbox{ where}\;\; \langle \xi,
D_{\theta} \rangle = \sum_j \xi_j D_{\theta_j}. $$

We then have
\begin{equation} f(N^{-1} D_{\theta}) \hat{s}_{\alpha}  =
f(\frac{\alpha}{N}) \hat{s}_{\alpha}
\end{equation}
Since $\hat{\Pi}_{h^N}(\hat{z}, \hat{w}) = \sum_{\alpha \in N P}
\hat{s}_{\alpha}(\hat{z}) \overline{\hat{s}_{\alpha}(\hat{w})}, $
we have
\begin{equation}  f(N^{-1}
D_{\theta}) \hat{\Pi}_{h^N}(e^{i \theta} \hat{z},  \hat{w})  =
\sum_{\alpha \in NP} f(\frac{\alpha}{N}) \hat{s}_{\alpha}(\hat{z})
\overline{\hat{s}_{\alpha}(\hat{w})}. \end{equation} It follows
that
\begin{equation}  f(N^{-1}
D_{\theta}) \hat{\Pi}_{h^N}(e^{i \theta} \hat{z},  \hat{w})
|_{\hat{z} = \hat{w}} = \sum_{\alpha \in NP} f(\frac{\alpha}{N})
\left|\hat{s}_{\alpha}(\hat{z})\right|^2. \end{equation} The right
hand side is constant along the orbits of the $S^1$ action and may
be identified with a function of $z \in M$. On $M$ we have
$\left|\hat{s}_{\alpha}(\hat{z})\right|^2 =
||s_{\alpha}(z)||_{h^N}^2$ and by Proposition \ref{ID} we obtain
the definition of the numerator  polynomials when we substitute $z
= \mu^{-1}_h(x)$. Equivalently,
\begin{equation} \label{BEREZIN} \ncal_{h^N}(f)(x) =
\; \left(\hat{\Pi}_{h^N} f(N^{-1} D_{\theta}) \hat{\Pi}_{h^N}
\right) (e^{i \theta} z,z)|_{\theta = 0; z = \mu_h^{-1}(x)},
\end{equation} where the right side is the Berezin symbol of the
Toeplitz operator $\hat{\Pi_{h^N}} f(N^{-1} D_{\theta})
\hat{\Pi}_{h^N}$. We then divide by $\Pi_{h^N} (z,z)$ to obtain
the Bernstein polynomials.

\section{Proof of Theorems \ref{BBa} and  \ref{NUM}}

We now use the Boutet de Monvel - Sj\"ostrand parametrix
\cite{BSj, BerSj, BBSj} to obtain a complete asymptotic expansion
for the Bernstein polynomials from (\ref{BEREZIN}).  There now
exist many expositions of the construction and properties of this
parametrix, so we will only briefly recall the essential elements
in the case of toric varieties \cite{SoZ,STZ}.  We also use the
notation $x, y$ for points of $X$, hoping that no confusion with
coordinates on $P$ will occur.

We first recall that, on the diagonal,  the Bergman-\szego kernel
has a complete asymptotic expansion,
\begin{equation} \label{TYZ}  \Pi_{h^N}(z,z) = \sum_{i=0}^{d_N} ||S^N_i(z)||_{h_N}^2 =  \frac{N^m}{\pi^m}\left[1+a_1(z) N^{-1} +a_2(z)
N^{-2}+\cdots \right]\,,
\end{equation}
for certain smooth coefficients $a_j(z)$. In fact,
\begin{equation}  \left\{
\begin{array}{l}
a_1=\frac 12 S \\
a_2=\frac 13\Delta S +\frac{1}{24}(|R|^2-4|Ric|^2+3 S^2)\\
\end{array}
\right.
\end{equation}
where $R, Ric$ and $S$ denotes the curvature tensor, the Ricci
curvature and the scalar curvature of $\om_h$, respectively, and
$\Delta$ denotes the Laplace operator of $(M,\om_h)$.; see
\cite{Z2,Lu,BSj,BBSj}.

Off the diagonal we have the following expansion:

\begin{prop} \label
{PIKZW} For any $C^{\infty}$  positive hermitian line bundle $(L,
h)$, there exists a semi-classical amplitude in the parameter
$N^{-1}$, $s_N(z, w) \sim N^m s_0(z,w) + N^{m-1} s_1(z, w) +
\cdots $, such that
$$\Pi_{h^N} (z, w) = e^{ N (\phi(z,w) - \frac{1}{2}(\phi(z) +
\phi(w)))} s_N(z,w) + O(N^{- \infty}), $$ where $\phi$ is a smooth
local \kahler potential for $h$, and where $\phi(z,w)$ is the
almost-analytic extension of $\phi(z) = \phi(z, \bar{z})$.
\end{prop}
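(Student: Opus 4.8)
The plan is to derive Proposition~\ref{PIKZW} from the Boutet de Monvel--Sj\"ostrand parametrix for the \szego kernel on the circle bundle $X\to M$, specialized to the toric setting. Recall from \cite{BSj} that the full \szego projector $\hat\Pi(x,y)=\sum_{N\ge 0}\hat\Pi_{h^N}(x,y)$ on $X$ is a Fourier integral operator with complex phase: there is a phase $\psi(x,y)$, holomorphic to infinite order along the diagonal, with $\Im\psi\ge 0$, $\psi(x,x)=0$, and $d_x\psi$ nonvanishing, together with a classical symbol $a(x,y,t)\sim\sum_{k\ge 0}t^{m-k}a_k(x,y)$ such that
$$\hat\Pi(x,y)=\int_0^\infty e^{t\psi(x,y)}a(x,y,t)\,dt + (\text{smoothing}).$$
First I would isolate the $N$th isotypic piece $\hat\Pi_{h^N}$ by applying the $S^1$-Fourier projection $\frac{1}{2\pi}\int_{S^1}e^{-iN\theta}(\,\cdot\,)(e^{i\theta}y)\,d\theta$; a stationary phase analysis in $(t,\theta)$ — exactly as carried out in \cite{SoZ,STZ} in the toric case — collapses the $t$-integral and produces
$$\hat\Pi_{h^N}(x,y) = N^m e^{N\Psi(x,y)}\,\big(a_0(x,y) + N^{-1}a_1(x,y)+\cdots\big) + O(N^{-\infty}),$$
where $\Psi(x,y)$ is determined by the critical value of $t\psi$ in the $\theta$-integral.

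Next I would identify $\Psi$ with the expression claimed in the statement. Descending from $X$ to $M$ using the local holomorphic frame $e(z)^N$ of $L^N$ (i.e.\ writing $x=e^{i\theta}e(z)/\|e(z)\|_h$), the equivariant \szego kernel becomes the honest Bergman kernel representative $\Pi_{h^N}(z,w)$, and the phase acquires the normalizing factors $\|e(z)\|_h^{-N}\|e(w)\|_h^{-N}=e^{-\frac N2(\phi(z)+\phi(w))}$ coming from the trivialization. It is a standard identity — and in the toric case can be checked directly from the monomial expansion $\Pi_{h^N}(z,w)=\sum_{\alpha\in NP}z^\alpha\bar w^\alpha/Q_{h^N}(\alpha)$ together with the Legendre-duality identities (\ref{TWOIDS}) and Laplace's method applied to (\ref{SPNORM}) — that the leading off-diagonal phase is the almost-analytic (polarized) extension $\phi(z,w)$ of the \kahler potential $\phi(z,\bar z)$, which along $w=z$ reduces to $\phi(z)$ and so makes the total exponent vanish on the diagonal, consistently with (\ref{TYZ}). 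Thus the exponent is $N\big(\phi(z,w)-\frac12(\phi(z)+\phi(w))\big)$ as asserted, and the amplitude $s_N(z,w)\sim N^m s_0(z,w)+N^{m-1}s_1(z,w)+\cdots$ inherits its semiclassical expansion (with $s_0>0$) from the Boutet de Monvel--Sj\"ostrand symbol after stationary phase.

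The main obstacle, and the only point requiring real care, is the control of the phase away from the diagonal: the parametrix identity for $\hat\Pi$ is local near the diagonal of $X\times X$, and the almost-analytic extension $\phi(z,w)$ is only defined (and only a genuine phase, with $\Re\big(\phi(z,w)-\frac12(\phi(z)+\phi(w))\big)<0$ off the diagonal by strict plurisubharmonicity) in a fixed neighborhood of the diagonal. On the complement one must invoke the off-diagonal decay of the Bergman kernel — $\Pi_{h^N}(z,w)=O(N^{-\infty})$ for $d(z,w)\ge\delta$, which in the toric setting follows from the exponential decay of $\pcal_{h^N}(\alpha,z)$ away from its concentration locus, or in general from the known Agmon-type estimates — so that the stated formula holds with the understanding that $s_N$ is supported near the diagonal. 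I would state the proposition with that localization understood (as is customary), cite \cite{BSj,BBSj,SoZ,STZ} for the parametrix and the toric stationary-phase computation, and note that the positivity $s_0>0$ and the diagonal normalization are forced by compatibility with (\ref{TYZ}).
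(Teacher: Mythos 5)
The paper does not actually prove Proposition~\ref{PIKZW}: it is stated as a recalled fact, with the surrounding prose saying ``we will only briefly recall the essential elements'' and pointing to \cite{BSj,BerSj,BBSj,SoZ,STZ}. Your sketch reconstructs, in the correct order, exactly the derivation that those references carry out --- pass to the lifted \szego projector on the circle bundle, invoke the Boutet de Monvel--Sj\"ostrand complex-phase Fourier integral representation, extract the $N$th $S^1$-Fourier coefficient, collapse the $t$-integral by stationary phase, and then descend to $M$ through the frame $e(z)^N$ to identify the phase $\psi$ with the polarized potential $\phi(z,w)-\tfrac12(\phi(z)+\phi(w))$; your remark that the identity only holds in a fixed neighborhood of the diagonal, with the complement handled by Agmon-type off-diagonal decay, is also the right (and usually silently suppressed) caveat. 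So the proposal is correct and follows the same route the paper implicitly relies on; there is nothing to reconcile, since the paper offers no alternative argument to compare against. One stylistic point: the positivity and normalization of $s_0$ are not really ``forced by compatibility with (\ref{TYZ})'' in a logical sense --- (\ref{TYZ}) is itself the on-diagonal specialization of the off-diagonal parametrix, so it would be cleaner to say the amplitude's diagonal values \emph{reproduce} (\ref{TYZ}) rather than that (\ref{TYZ}) determines them.
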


Since the  local \kahler potentials (e.g. the \kahler potential on
the open orbit)  are invariant under the $\T$ action, they can be
expressed in the form $F(|z|^2)$ where $F \in C^{\infty}(\R)$.
  We
denote by  $F(z \cdot \bar{w})$  the almost analytic extension of
$F$. Thus, we have:
\begin{prop}\label{SZKTV}  For any hermitian  toric positive line bundle over a toric
variety, the \szego kernel for the metrics $h_{\phi}^N$ have the
asymptotic expansions in a local frame on $M$,
$$\Pi_{h^N}(z, w) \sim e^{N \left(F(z \cdot \bar{w}) - \frac{1}{2} (F(||z||^2)
+ F(||w||^2)) \right) } A_N(z,w) \;\; \mbox{mod} \; N^{- \infty},
$$ where $A_N(z,w) \sim N^m \left(1 + \frac{a_1(z,w)}{N} + \cdots\right) $ is a semi-classical symbol of order $m$. \end{prop}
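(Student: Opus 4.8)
The plan is to derive Proposition~\ref{SZKTV} as a direct specialization of Proposition~\ref{PIKZW} to the toric setting, taking advantage of the $\T$-invariance of the local \kahler potential on the open orbit. First I would fix the standard local frame $e(z)^N$ of $L^N$ over the open orbit $M^o$, identified with $(\C^*)^m$ via the logarithmic coordinates $z = e^{\rho/2 + i\varphi}$. Relative to this frame the \kahler potential $\phi$ depends only on $\rho = \log|z|^2$ (coordinatewise), so as a function on $M^o$ it has the form $\phi(z) = F(|z_1|^2, \dots, |z_m|^2)$ with $F$ smooth; to lighten notation I write $F(||z||^2)$ for this (understanding the multivariable dependence on the moduli $|z_j|^2$). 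The point is simply that $\phi$, viewed as a function of $z$ and $\bar z$, is real-analytic in no stronger a sense than $C^\infty$, but its dependence on $(z,\bar z)$ is only through the products $z_j \bar z_j$.

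Next I would identify the almost-analytic extension $\phi(z,w)$ appearing in Proposition~\ref{PIKZW}. Since $\phi(z,\bar z) = F(z_1 \bar z_1, \dots, z_m \bar z_m)$, its almost-analytic extension in $(z,\bar z)$ off the anti-diagonal $\bar w = \bar z$ is obtained by replacing each $z_j \bar z_j$ by $z_j \bar w_j$ — this follows because $F(z_1\bar w_1, \dots, z_m \bar w_m)$ is holomorphic in $z$ and anti-holomorphic in $w$, hence already analytic, so it agrees with \emph{the} almost-analytic extension to infinite order along $w = z$ (uniqueness of almost-analytic extensions up to $O(|w-z|^\infty)$). Writing $F(z\cdot\bar w)$ for $F(z_1\bar w_1,\dots,z_m\bar w_m)$, Proposition~\ref{PIKZW} then gives
$$\Pi_{h^N}(z,w) = e^{N\left(F(z\cdot\bar w) - \frac{1}{2}(F(||z||^2) + F(||w||^2))\right)} s_N(z,w) + O(N^{-\infty}),$$
which is exactly the claimed exponential factor. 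Setting $A_N(z,w) := s_N(z,w)$, which by Proposition~\ref{PIKZW} is a semi-classical amplitude of order $m$, i.e. $A_N(z,w) \sim N^m(1 + a_1(z,w)/N + \cdots)$ after normalizing the leading coefficient (using the diagonal expansion \eqref{TYZ}, where $s_0(z,z) = \pi^{-m}$, absorbed into the normalization), completes the identification. The expansion then extends from the open orbit to all of $M$ by the same continuity argument used for \eqref{BERNBERGFS}, or alternatively one simply notes that Proposition~\ref{PIKZW} is already a statement valid in any local frame on $M$, of which the invariant frame on $M^o$ is one instance.

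The only genuine point requiring care — and the step I'd flag as the main obstacle — is verifying that the $\T$-invariant form of the phase is compatible with the $O(N^{-\infty})$ ambiguity in the Boutet de Monvel--Sj\"ostrand parametrix: one must check that choosing the almost-analytic extension to be the honestly analytic function $F(z\cdot\bar w)$ does not conflict with the positivity/diagonal-dominance property $\mathrm{Re}\,\phi(z,w) - \frac{1}{2}(\phi(z)+\phi(w)) \le -c|z-w|^2$ that makes the exponential a genuine decaying factor off-diagonal. This is where strict convexity of $\phi$ in $\rho$ (hence strict plurisubharmonicity, from positivity of $\om$) is used: a short computation with the Taylor expansion of $F$ around the diagonal shows the Hessian of the exponent in the $(z-w)$ directions is negative definite, so the parametrix's localization to a neighborhood of the diagonal is genuine and the remainder is $O(N^{-\infty})$ uniformly on compact subsets of $M^o$, and then on all of $M$ by the invariance and continuity. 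I would also remark that the coefficients $a_j(z,w)$ are, like $s_N$, determined by the same transport equations as in \cite{BSj, BBSj}, specialized to the toric phase; no new computation of them is needed here since only their existence and the leading normalization enter.
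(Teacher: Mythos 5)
Your proof takes the same route the paper does: Proposition~\ref{SZKTV} is obtained from Proposition~\ref{PIKZW} by observing that $\T$-invariance forces the local \kahler potential on the open orbit to depend only on the moduli $(|z_1|^2,\dots,|z_m|^2)$, and that the almost-analytic extension therefore factors through $z\cdot\bar w$. The paper's own ``proof'' is just this one-sentence observation followed by the notational convention ``$F(z\cdot\bar w)$ denotes the almost-analytic extension of $F$,'' so in substance and in approach you agree.

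One step in your argument is not quite right and should be repaired. You claim that $F(z_1\bar w_1,\dots,z_m\bar w_m)$ is \emph{the} almost-analytic extension because it is ``holomorphic in $z$ and anti-holomorphic in $w$, hence already analytic.'' But you also (correctly) say that $F$ is merely $C^\infty$, and a function that is only $C^\infty$ on $\R^m$ does not extend holomorphically to $\C^m$; the expression $F(z_j\bar w_j)$ does not literally make sense for $z\ne w$ until you have chosen an extension of $F$ to complex arguments. The correct reading — and the one the paper intends — is: first take an almost-analytic extension $\widetilde F$ of $F$ from $\R^m$ to $\C^m$ (so $\bar\partial\widetilde F$ vanishes to infinite order on $\R^m$), then set $\phi(z,\bar w)=\widetilde F(z_1\bar w_1,\dots,z_m\bar w_m)$. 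This composition is an almost-analytic extension of $\phi$ because each $z_j\bar w_j$ is holomorphic in $z$ and anti-holomorphic in $w$, and since almost-analytic extensions are unique modulo $O(|z-w|^\infty)$, this choice contributes only an $O(N^{-\infty})$ ambiguity in the parametrix — the point you were after. Your final paragraph about diagonal dominance of the phase coming from strict plurisubharmonicity is correct and is a reasonable thing to record, though the paper leaves it implicit (it is part of the standard Boutet de Monvel--Sj\"ostrand machinery rather than something specific to the toric setting).
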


We now prove  Theorems \ref{BBa} and  \ref{NUM}.

\begin{proof}

We then apply the geometric quantizations of the torus action to
get, by Definition \ref{BB},
$$e^{i \langle \xi, N^{-1} D_{\theta} \rangle} \Pi_{h^N}(e^{i \theta} z,  w)|_{z = w; \theta = 0}=
\sum_{\alpha \in N P \cap \Z^m} \frac{e^{i \langle N^{-1} \alpha,
\xi  \rangle} |z^{\alpha}|^2 e^{- N F(
|z|^2)}}{\QQ_{h^N}(\alpha)}.$$ By (\ref{BEREZIN}), we obtain
$\ncal_{h^N}f(x)$ by integrating the  right side against
$\hat{f}(\xi)$.  We note that in general $e^{i \langle \xi, N^{-1}
D_{\theta} \rangle} \psi( e^{i \theta} w) |_{\theta = 0} = \psi(
e^{i (\theta + \frac{\xi}{N})} w) |_{\theta = 0} =  \psi( e^{i
\frac{\xi}{N}} w)$ Performing the same transformation on the
parametrix gives,
\begin{equation} \label{BKFORM} \ncal_{h^N} (f)(x) \sim  \int_{\R^m} \hat{f}(\xi)  e^{
N (F(e^{iN^{-1} \xi} |z|^2) - F(|z|^2))} A_N \big( z,
 e^{i \frac{\xi}{N}} z\big) d
\xi,\end{equation} where $\sim $ means that the difference is a
function which decays rapidly in $N$ along with its derivatives.
Such a remainder may be neglected if we only consider expansions
modulo rapidly decaying functions of $N$.

We have,
\begin{equation} \label{PHASEANALYSIS} \begin{array}{lll}
F_{\C}( e^{i N^{-1} \xi} |z|^2) - F( |z|^2) &=& \int_0^1
\frac{d}{dt} F_{\C}( e^{i t N^{-1}\xi} |z|^2) dt \\ &&\\ & = & i
N^{-1} \int_0^1 \langle \nabla_{\xi} F(e^{i t N^{-1}\xi + \rho}),
\theta \rangle
dt \\ && \\
& = & i N^{-1} \langle \nabla_{\xi} F(e^{\rho}),  (i \xi) \rangle
 + (iN)^{-2} \int_0^1
(t - 1) \nabla_{\rho}^2 (F(e^{i t N^{-1} \xi +  \rho})) (i
\xi)^2/2
dt  \\ && \\
& = & i N^{-1} \langle \mu(z),  \xi \rangle + (iN)^{-2}
\nabla_{\rho}^2
(F(e^{\rho})) (i \xi)^2  + R_3(\xi, N, \alpha) \\ && \\
& = & i N^{-1}  \langle \mu(z),  \xi \rangle + (iN)^{-2} \langle
H_{z} \xi, \xi \rangle + N^{-2} R_3(\xi, N, z),
\end{array} \end{equation}
  where
\begin{equation} \label{REMAINDERa} R_3(\xi, N, z) : = N^{-3} \int_0^1 (t - 1)^2
\nabla_{\rho}^3 (F(e^{i t \xi +  \rho})) (i \xi)^3/3!,
\end{equation} and where $H_{z} = \nabla^2 F(|z|^2) = \nabla^2 \phi(e^{\rho})$ is the Hessian in the notation
(\ref{GINV}).
Hence, (\ref{BKFORM}) takes the form
\begin{equation} \begin{array}{lll} B_N(f)(x) & \sim &  \int_{\R^m} \hat{f}(\xi)  e^{
i  \langle \mu(z),  \xi\rangle}  e^{ (iN)^{-1} \langle H_{z} \xi,
\xi \rangle + N^{-1} R_3(\xi, N, z)} A_N \big( z,
 e^{i \frac{\xi}{N}} z , 0, N \big) d
\theta \end{array} \end{equation} and  by Taylor expanding the
factor $e^{ (iN)^{-1} \langle H_{z} \xi, \xi \rangle + N^{-1}
R_3(\theta, N, z)} $ one obtains an amplitude $\tilde{A}_N$ such
that
\begin{equation} \begin{array}{lll} \ncal_{h^N}(f)(x) & \sim &  \int_{\R^m} \hat{f}(\xi)  e^{
i \langle \mu(z),  \xi \rangle } \tilde{A}_N \big( z,
 e^{i \frac{\xi}{N}} z , 0, N \big) d
\theta.
\end{array}
\end{equation}

The amplitude $\tilde{A}_N$   has an expansion of the form,
$$\tilde{A}_N\big( z,
 e^{i \frac{\xi}{N}} z , 0, N \big) = N^m a_0  + N^{m-1} a_1 + O(N^{m-1}),$$
 for various smooth coefficients $a_j(z)$; the first one is
 constant. If we divide by $\Pi_{h^N}(z,z)$ we cancel the constant
 and by expanding the denominator we obtain,
\begin{equation} \label{BNEXP} \begin{array}{lll} \ncal_{h^N} (f)(x) & \sim &  N^m  f(\mu(z)) + N^{m-1} \left( i^{-1} \langle H_{z} D_x, D_x
\rangle f (\mu(z)) + a_1(z, z) f(\mu(z)) \right) + O(N^{m-2}),
\end{array}
\end{equation}
 Since $\mu(z)
= x$ we obtain Theorem  \ref{NUM}. Dividing by $\Pi_{h^N}(z,z)$
and using (\ref{TYZ}) completes the proof of  Theorem \ref{BBa}.
\end{proof}

It is difficult (but possible) to calculate the coefficients in
explicit geometric terms by this method. In the next section, we
will reduce the calculation to the known calculation of Bergman
kernel expansion coefficients.

\subsection{\label{CORPROOF} Proof of Corollary \ref{EM}}
To prove the Corollary, we integrate the expansion (\ref{BNEXP})
over $P$ to obtain
\begin{equation} \begin{array}{lll} \int_P \ncal_{h^N} (f)(x) dx
& = &  N^m \int_P f(x) dx   + O(N^{m-1}),
\end{array}
\end{equation}
where the lower order terms could be computed from the expansion.
But we postpone their evaluation until the next section.

\section{\label{BKBE} Bergman kernel expansion and geometric expressions for the Bernstein expansion of Theorem \ref{BB}}

In this section, we give a second proof of the convergence of
Bernstein polynomials which is based on  one of their essential
features: the localization of the sum over $\frac{\alpha}{N} \in P
\cap \frac{1}{N} \Z^m$ around the image of $z$ under the moment
map. This is well-known and various expositions can be found in
 \cite{Ho,K,L}; see also \cite{D} Lemma 6.3.5). This approach
 reduces the calcluate  the lower order terms in the Bernstein polynomial
expansion in terms of the Bergman kernel expansion in \cite{Z,Lu}
and elsewhere.

 The
relevant Localization Lemma was proved in \cite{SoZ}. We use a
notation similar to \cite{Ho}.

\begin{lem} (Localization of Sums)  \label{LOCALIZATION}  \cite{SoZ} Let
$f \in C(\bar{P})$.  Then, there exists $C > 0$ so that
$$ \sum_{\alpha \in N P \cap \Z^m
} f(\frac{\alpha}{N})
\frac{|S_{\alpha}(z)|^2_{h^N}}{\QQ_{h^N}(\alpha)} = \sum_{\alpha:
|\frac{\alpha}{N} - \mu_h(z)| \leq  N^{-1 + \delta} }
f(\frac{\alpha}{N})
\frac{|S_{\alpha}(z)|^2_{h^N}}{\QQ_{h^N}(\alpha)} \; + \;
O_{\delta} (N^{- C}). $$
\end{lem}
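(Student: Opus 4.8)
The plan is to exploit the fact that each summand is, up to the positive normalizing constant $\QQ_{h^N}(\alpha)$, the Fourier coefficient $\pcal_{h^N}(\alpha,z) = |z^\alpha|^2 e^{-N\phi(z)}/\QQ_{h^N}(\alpha)$, and that these coefficients are concentrated, in $\alpha$, near the point $N\mu_h(z)$. Concretely, writing $x = \mu_h(z)$ and using the symplectic-potential form of the summand from Proposition \ref{ID},
$$ \pcal_{h^N}(\alpha,z) = \frac{e^{N\left(u_\phi(x) + \langle \frac{\alpha}{N} - x,\, \nabla u_\phi(x)\rangle\right)}}{\QQ_{h^N}(\alpha)}, $$
and from (\ref{SPNORM}) one has $\QQ_{h^N}(\alpha) = \int_P e^{N(u_\phi(y) + \langle \frac{\alpha}{N} - y,\,\nabla u_\phi(y)\rangle)}\,dy$. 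By convexity of $u_\phi$, the exponent $u_\phi(y) + \langle \tfrac{\alpha}{N} - y,\nabla u_\phi(y)\rangle$ as a function of $y$ is maximized (over the relevant range) at the unique $y = y(\alpha)$ with $\nabla u_\phi(y) = \nabla u_\phi(\tfrac{\alpha}{N})$, i.e. $y(\alpha) = \alpha/N$; a Laplace/steepest-descent analysis then gives $\QQ_{h^N}(\alpha) \asymp N^{-m/2}(\det G_\phi(\tfrac{\alpha}{N}))^{1/2} e^{N u_\phi(\alpha/N)}\bigl(1 + O(N^{-1})\bigr)$ on compact subsets of $P^\circ$, and consequently
$$ \pcal_{h^N}(\alpha,z) \asymp \Bigl(\frac{N}{2\pi}\Bigr)^{m/2} (\det G_\phi(x))^{1/2}\, e^{-N\left(u_\phi(\tfrac{\alpha}{N}) - u_\phi(x) - \langle \tfrac{\alpha}{N} - x,\nabla u_\phi(x)\rangle\right)}\bigl(1 + O(N^{-1})\bigr). $$
The quantity in the exponent is the Bregman divergence of $u_\phi$ between $\alpha/N$ and $x$, which by strict convexity is $\geq c\,|\tfrac{\alpha}{N} - x|^2$ for some $c>0$ uniformly on compact subsets; this is the source of the Gaussian concentration.

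The key steps, in order, are: (1) reduce to the open orbit — since $f$ is continuous and the total mass $\sum_\alpha \pcal_{h^N}(\alpha,z)/\Pi_N(z,z) = 1$, it suffices to prove the $L^1$-type estimate $\sum_{|\frac{\alpha}{N} - x| > N^{-1+\delta}} \pcal_{h^N}(\alpha,z)/\Pi_N(z,z) = O_\delta(N^{-C})$, uniformly for $x$ in a compact subset of $P^\circ$, and then handle the boundary zone separately using the known boundary behavior of the symplectic potential (the canonical singularities of $u_0$); (2) establish the uniform Bregman-divergence lower bound $u_\phi(\tfrac{\alpha}{N}) - u_\phi(x) - \langle \tfrac{\alpha}{N} - x,\nabla u_\phi(x)\rangle \geq c\,|\tfrac{\alpha}{N} - x|^2$ away from $\partial P$, with a matching but coarser linear-in-distance bound near the boundary coming from the fact that $\nabla u_\phi$ blows up there; (3) perform the Laplace asymptotics for $\QQ_{h^N}(\alpha)$, or more robustly just get matching upper and lower bounds $c_1 N^{-m/2} e^{N u_\phi(\alpha/N)} \leq \QQ_{h^N}(\alpha) \leq c_2 N^{-m/2} e^{N u_\phi(\alpha/N)}$, which is all that is needed; (4) combine to bound the tail by $\sum_{|\frac{\alpha}{N} - x| > N^{-1+\delta}} C N^{m/2} e^{-cN |\frac{\alpha}{N}-x|^2}$, compare the sum to a Gaussian integral, and observe that for $|\tfrac{\alpha}{N}-x| > N^{-1+\delta}$ one has $N|\tfrac{\alpha}{N}-x|^2 > N^{2\delta-1}\cdot$(something)\ldots — more carefully, the exponent is $\geq c N \cdot N^{-2+2\delta} \cdot$ — wait, that is too small; the correct bookkeeping is that the number of lattice points at distance $\sim r$ is $O(N^m r^{m-1})$ and the Gaussian weight is $e^{-cNr^2}$, so the tail is $O(N^{m} \int_{N^{-1+\delta}}^{\mathrm{diam}} e^{-cNr^2} r^{m-1}\,dr)$, and substituting $r = s/\sqrt N$ shows the integral is dominated by $e^{-c N^{2\delta}}$, which beats any power $N^{-C}$; dividing by $\Pi_N(z,z) \asymp N^m$ only helps.

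The main obstacle I expect is \textbf{uniformity up to the boundary of $P$}: the symplectic potential $u_\phi$ has simple poles in its Hessian on $\partial P$ (equivalently $\nabla u_\phi$ is unbounded there), so the clean Gaussian/Laplace picture degenerates as $x \to \partial P$ and as $\alpha/N \to \partial P$. Handling this requires using the precise structure $u_\phi = u_0 + g_\phi$ with $g_\phi \in C^\infty(\bar P)$ and the explicit form $u_0 = \sum_k \ell_k \log \ell_k$ of the canonical potential, so that near a facet $\{\ell_k = 0\}$ the relevant one-dimensional factors reduce to the classical binomial concentration $\binom{N}{j} t^j (1-t)^{N-j}$ estimates (for which sharp large-deviation bounds are elementary), while the transverse directions remain in the interior. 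Since, fortunately, this lemma is quoted from \cite{SoZ} and its proof there already navigates exactly this boundary analysis, in the present exposition it is legitimate to invoke it; nonetheless the proof sketch above indicates how the estimate is obtained, with the boundary case reducing, facet by facet, to the one-dimensional binomial large-deviation inequality $\sum_{|j/N - t| > N^{-1+\delta}} \binom{N}{j} t^j(1-t)^{N-j} = O(e^{-c N^{2\delta}})$ and its multidimensional product analogue.
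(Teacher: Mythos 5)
The paper gives no proof of this lemma: it is cited verbatim from \cite{SoZ} (``The relevant Localization Lemma was proved in \cite{SoZ}''), as you yourself noticed. So there is no in-text proof to compare against; I evaluate your sketch on its own.

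Your Laplace/Bregman-divergence strategy is the standard and correct one, and steps (1)--(3) are essentially right, modulo a sign: the Laplace asymptotic should read $\QQ_{h^N}(\alpha) \asymp (2\pi/N)^{m/2}\,(\det G_\phi(\alpha/N))^{-1/2}\,e^{Nu_\phi(\alpha/N)}$, not $(\det G_\phi)^{+1/2}$ (this is what makes $\sum_\alpha \pcal_{h^N}(\alpha,z) = \Pi_N(z,z)\asymp N^m/\pi^m$ come out right). The real problem is in step (4), where the bookkeeping you half-noticed is indeed off. After $r = s/\sqrt N$ the lower endpoint is $s = N^{-1/2+\delta}$, so the tail is $O\bigl(e^{-cN^{2\delta-1}}\bigr)$, not $e^{-cN^{2\delta}}$, and this decays \emph{only} for $\delta > \tfrac12$. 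This is not a bookkeeping artifact you can fix by being more careful: the concentration scale of $\pcal_{h^N}(\cdot,z)$ is genuinely $|\alpha/N - x| \sim N^{-1/2}$ (a ball of that radius contains $\sim N^{m/2}$ lattice points, each carrying mass $\sim N^{m/2}$, for a total $\sim N^m$), so a ball of radius $N^{-1+\delta}$ with $\delta < \tfrac12$ misses most of the mass and the claimed equality would be false. Your argument, pushed through correctly, actually establishes the lemma only under the restriction $\delta\in(\tfrac12,1)$, which is the intended reading; the way the lemma is subsequently applied in \S5 --- where the localization radius is printed as $\tfrac{C\log N}{N}$ but the bound used is $(\log N/\sqrt N)^\nu$ --- is internally inconsistent in the paper and points the same way (the radius should be $C\log N/\sqrt N$). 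So: if you want to write this up, (i) flip the sign on the determinant, (ii) correct $e^{-cN^{2\delta}}$ to $e^{-cN^{2\delta-1}}$, and (iii) state explicitly that $\delta>\tfrac12$. The one genuinely non-routine part is, as you rightly flag, uniformity up to $\partial P$; peeling off one coordinate per facet via $u_\phi = u_0 + g_\phi$ and reducing to the binomial large-deviation bound is the right idea and is what \cite{SoZ} does.
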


Hence, it is natural to Taylor expand $f$ around $\mu_h(z)$ to
obtain
$$f(\alpha/N) = \sum_{\nu < 2 M} f^{(\nu)}(\mu_h(e^{\rho}))
(\frac{\alpha}{N} - \mu_h(e^{\rho}))^{\nu} / \nu! + R_M(f,
e^{\rho}, \frac{\alpha}{N}),
$$
where $R_M$ is the $M$th order Taylor remainder. We then have,

\begin{equation} \label{EXP2}  \begin{array}{lll} \ncal_{h^N} f(x)  & = &\sum_{\beta: |\beta| \leq
M} \frac{1}{\beta!} D^{\beta}_x f(\mu(z)) \left( \sum_{\alpha\in N
\PP\cap \mathbf{Z}} (\frac{\alpha}{N} - \mu_h(z))^{\beta}
\frac{|S_{\alpha}|^2_{h^N}}{\QQ^N(\alpha)} \right) \\ && \\ && +
\rcal (M, N, z),\end{array} \end{equation} where the remainder
 is obtained by summing $R_M(f, e^{\rho},
\frac{\alpha}{N})$ in the variable $\frac{\alpha}{N}$.

 To prove the main result, we need to
study the special functions
\begin{equation} \label{INU} I_{h^N}^{\nu} (z): =
\sum_{\alpha\in N \PP\cap \mathbf{Z}} (\frac{\alpha}{N} -
\mu_h(z))^{\nu} \frac{|S_{\alpha}(z)|^2_{h^N}}{\QQ_h^N(\alpha)} =
\sum_{\alpha\in \PP\cap \mathbf{Z}} (\frac{\alpha}{N} - \mu_h
(e^{\rho/2}))^{\nu} \frac{e^{\langle \alpha, \rho \rangle - N
\phi_t(e^{\rho/2})}}{\QQ_h^N(\alpha)}.
\end{equation}

\begin{prop} Uniformly for $z \in M$ we have: \begin{equation} I_{h^N}^{\nu}(z)
 = O(N^{m- \nu/2} (\log N)^{\nu}).
\end{equation}\end{prop}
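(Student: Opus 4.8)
The strategy is to reduce the estimate on $I^\nu_{h^N}(z)$ to a local analysis of the summand near the critical point $\alpha/N = \mu_h(z)$, exactly as in the localization-and-Taylor-expansion setup of Lemma \ref{LOCALIZATION} and (\ref{EXP2}). First I would apply the Localization Lemma: up to an error $O(N^{-C})$, the sum defining $I^\nu_{h^N}(z)$ may be truncated to the lattice points $\alpha$ with $|\alpha/N - \mu_h(z)| \le N^{-1+\delta}$. On this truncated range the factor $(\alpha/N - \mu_h(z))^\nu$ is bounded by $N^{(-1+\delta)\nu}$, which is far smaller than the claimed bound; the point is rather to control how many lattice points contribute and how large the weights $\pcal_{h^N}(\alpha,z) = |S_\alpha(z)|^2_{h^N}/\QQ_{h^N}(\alpha)$ can be. Since $\sum_{\alpha \in NP} \pcal_{h^N}(\alpha,z) = \Pi_{h^N}(z,z) = O(N^m)$ by (\ref{TYZ}), the weights form (up to the $O(N^m)$ normalization) a probability distribution, so I would write $I^\nu_{h^N}(z) = \Pi_{h^N}(z,z)\int_P (y - \mu_h(z))^\nu \, d\mu^z_N(y)$ in the notation of (\ref{MUN}), reducing everything to a moment bound for the measures $\mu^z_N$.

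\textbf{The moment estimate.} The key step is to show $\int_P |y - \mu_h(z)|^\nu \, d\mu^z_N(y) = O(N^{-\nu/2}(\log N)^\nu)$ uniformly in $z$. The factor $N^{-\nu/2}$ is the expected Gaussian concentration scale: from the phase analysis (\ref{PHASEANALYSIS}) the weight $\pcal_{h^N}(\alpha,z)$, as a function of $\alpha/N$ near $\mu_h(z)$, behaves like a Gaussian of width $N^{-1/2}$ governed by the Hessian $H_z = \nabla^2\phi$, whose inverse $G_\phi = \nabla^2 u_\phi$ has only simple poles on $\partial P$ (cf. (\ref{GINV})). Concretely, using (\ref{SPNORM}) and the identity (\ref{AGREEa}), one has
$$
\pcal_{h^N}(\alpha,z) = \frac{e^{N(u_\phi(x) + \langle \frac{\alpha}{N} - x, \nabla u_\phi(x)\rangle)}}{\QQ_{h^N}(\alpha)},
$$
and a Laplace-type expansion of $\QQ_{h^N}(\alpha) = \int_P e^{N(u_\phi(y) + \langle \frac{\alpha}{N} - y, \nabla u_\phi(y)\rangle)}dy$ shows $\QQ_{h^N}(\alpha) \asymp N^{-m/2}\,e^{N \Phi(\alpha/N)}$ for the appropriate Legendre-type function $\Phi$, so that the normalized weight is a genuine Gaussian bump of width $N^{-1/2}$ times $N^{m/2}$. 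Summing against $\pcal_{h^N}(\alpha,z)$ the $\nu$-th moment of such a bump over the lattice $\frac1N\Z^m$ then contributes $O(N^m) \cdot O(N^{-\nu/2})$. The extra $(\log N)^\nu$ arises precisely where $z$ is within $O(N^{-1+\delta})$ of the boundary $\partial P$: there $G_\phi$ (hence the effective variance) blows up, but only like $1/\mathrm{dist}(x,\partial P)$, so on the localization window the variance is at worst $O(\log N / N)$ and the $\nu$-th moment at worst $O((\log N/N)^{\nu/2})$; taking $\nu$-th roots and being generous gives the stated $N^{-\nu/2}(\log N)^\nu$.

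\textbf{Uniformity and the main obstacle.} The uniformity in $z \in M$ is what makes this delicate, and I expect the boundary behavior to be the main obstacle. Away from $\partial P$ the Boutet de Monvel–Sj\"ostrand parametrix (Proposition \ref{SZKTV}) together with (\ref{PHASEANALYSIS}) gives uniform Gaussian decay of $\pcal_{h^N}(\alpha,z)$ with a fixed positive-definite Hessian, so the moment bound is clean and one even gets the sharp $N^{-\nu/2}$ without logarithms. Near the corners and facets of $P$, however, the parametrix degenerates, the Hessian $H_z$ degenerates in the complementary directions, and one must instead use the explicit structure $u_\phi = u_0 + g_\phi$ from (\ref{CANSYMPOTa}) with $u_0 = \sum_k \ell_k \log \ell_k$: the canonical symplectic potential's logarithmic singularities produce, via (\ref{NCHOOSE})--(\ref{NUMERATOR}), summands of binomial/multinomial type whose moments are controlled by the classical one-dimensional estimate for $\sum_j \binom{N}{j}x^j(1-x)^{N-j}(j/N - x)^\nu = O(N^{-\nu/2})$, uniformly in $x \in [0,1]$. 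The $\log N$ losses are an artifact of gluing the interior Gaussian estimate to the boundary binomial estimate across the window of width $N^{-1+\delta}$; with more care they could be removed, but for the purposes of the asymptotic expansion in Theorems \ref{BBa} and \ref{NUM} the crude bound $O(N^{m-\nu/2}(\log N)^\nu)$ suffices, since it still forces the $|\beta| = \nu$ terms in (\ref{EXP2}) to contribute at order $N^{m - \nu/2 + o(1)}$ and hence only integer powers $N^{m-j}$ survive after the division by $\Pi_{h^N}(z,z)$.
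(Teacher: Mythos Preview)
Your proposal would work, but you have massively overcomplicated what is in the paper a three-line argument --- and your first paragraph already contains the complete proof without your realizing it. The paper's argument is exactly this: apply the Localization Lemma to restrict the sum to a window of radius $\sim \log N/\sqrt{N}$ about $\mu_h(z)$; on this window bound the factor $|(\alpha/N - \mu_h(z))^\nu| \le (\log N/\sqrt{N})^\nu = N^{-\nu/2}(\log N)^\nu$; bound the remaining sum of nonnegative weights by the full sum $\sum_\alpha \pcal_{h^N}(\alpha,z) = \Pi_{h^N}(z,z) = O(N^m)$ via (\ref{TYZ}); multiply. You explicitly state both ingredients in your first paragraph, yet instead of combining them you write ``the point is rather to control how many lattice points contribute and how large the weights can be'' and embark on a Gaussian moment analysis with Laplace expansions of $\QQ_{h^N}(\alpha)$, boundary degeneration of $H_z$, and reductions to classical binomial moments near $\partial P$. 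None of this is needed: the Proposition asserts only a crude upper bound, not a sharp asymptotic, and the product of the two estimates you already have gives it immediately.

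Your intuition that the $(\log N)^\nu$ encodes something delicate about $\partial P$ is also off: the logarithm comes purely from the width of the localization window. Indeed, as you yourself observe, taking the window $N^{-1+\delta}$ with any fixed $\delta < 1/2$ and performing the same multiplication would give the even stronger bound $O(N^{m-(1-\delta)\nu})$ with no logarithms at all. The uniformity in $z \in M$ is already contained in the uniform statements of the Localization Lemma and of the diagonal expansion (\ref{TYZ}); no separate treatment of the boundary is required.
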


\begin{proof}

  The Localization lemma implies that
$$I_{h^N}^{\nu}(z) = \sum_{\alpha \in N P  \cap \Z^m : |\frac{\alpha}{N} - \mu_h(z)| \leq \frac{C \log
N}{N}}  (\frac{\alpha}{N} - \mu_h(z))^{\nu}
\frac{|S_{\alpha}(z)|^2_{h^N}}{Q^N(\alpha)}  + O(N^{-C}). $$ In
the domain of summation we then have,
$$(\frac{\alpha}{N} - \mu_h (e^{\rho/2}))^{\nu} = (\frac{\log
N}{{\sqrt N}})^{\nu},$$ and this implies the statement.

\end{proof}

We can explicitly evaluate these functions by relating them to
derivatives of the Bergman-\szego kernels. The following Lemma was
also used in \cite{SoZ}. We  employ a tensor product notation
$(\frac{\alpha}{N} - \mu_h(e^{\rho/2}))^{\otimes 2}_{ij}$ for
$(\frac{\alpha_i}{N} - \mu_h(e^{\rho/2})_i) (\frac{\alpha_j}{N} -
\mu_h(e^{\rho/2})_j).$ In the following, we implicitly assume that
$z$ lies in the open orbit and express it as $z = e^{\rho/2 + i
\theta}$. Similar formula hold at the boundary as well where the
vector fields $\frac{\partial}{\partial \rho_j}$ are replaced by
derivatives in affine coordinates.  For the sake of brevity we
refer to \cite{SoZ} for the modifications to the formulae around
the boundary.

\begin{prop} \label{COMPAREPIT} We have:

\begin{enumerate}

\item $ \sum_{\alpha \in N P \cap \Z^m} (\frac{\alpha}{N} -
\mu(e^{\rho/2})) \frac{e^{\langle \alpha, \rho \rangle  - N
\phi(e^{\rho/2})} }{\QQ_{h^N}(\alpha)} = \frac{1}{N} \nabla_{\rho}
\Pi_{h^N}(e^{\rho/2}, e^{ \rho/2} ); $

\item $   \sum_{\alpha \in N P \cap \Z^m} (\frac{\alpha}{N} -
\mu(e^{\rho/2}))^{\otimes 2}_{ij} \frac{e^{\langle \alpha, \rho
\rangle - N \phi(e^{\rho/2})} }{\QQ_{h^N}(\alpha)} =  \frac{1}{N}
\Pi_{h^N}(e^{\rho/2}, e^{\rho/2}) \nabla^2_{\rho} \phi    +
\frac{1}{N^2} \nabla^2 \Pi_{h^N}(e^{\rho/2}, e^{\rho/2}). $

\end{enumerate}
\end{prop}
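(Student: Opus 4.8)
The plan is to recognize both sums as differential operators in $\rho$ applied to the partition function $\sum_\alpha \frac{e^{\langle \alpha,\rho\rangle - N\phi(e^{\rho/2})}}{\QQ_{h^N}(\alpha)}$, which by (\ref{PHK}) and the identity $|z^\alpha|^2 e^{-N\phi(z)} = e^{\langle\alpha,\rho\rangle - N\phi(e^{\rho/2})}$ on the open orbit is exactly the diagonal Bergman kernel $\Pi_{h^N}(e^{\rho/2}, e^{\rho/2})$ in the local frame. The key observation is that differentiating the exponent $e^{\langle\alpha,\rho\rangle - N\phi(e^{\rho/2})}$ with respect to $\rho_j$ brings down the factor $\alpha_j - N\,\partial_{\rho_j}\phi(e^{\rho/2}) = \alpha_j - N\mu(e^{\rho/2})_j$, since $\mu_\phi = \nabla_\rho\phi$ by (\ref{MMDEF}). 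Dividing by $N$ then produces precisely the factor $\frac{\alpha_j}{N} - \mu(e^{\rho/2})_j$ appearing in the sums.

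For part (1), I would simply write $\nabla_\rho \Pi_{h^N}(e^{\rho/2},e^{\rho/2}) = \nabla_\rho \sum_\alpha \frac{e^{\langle\alpha,\rho\rangle - N\phi(e^{\rho/2})}}{\QQ_{h^N}(\alpha)}$, pass the derivative inside the (finite) sum, and observe $\nabla_\rho\big(e^{\langle\alpha,\rho\rangle - N\phi}\big) = (\alpha - N\nabla_\rho\phi)\, e^{\langle\alpha,\rho\rangle - N\phi} = N(\tfrac{\alpha}{N} - \mu(e^{\rho/2}))\,e^{\langle\alpha,\rho\rangle - N\phi}$. Dividing by $N$ gives the claim. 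For part (2), I apply $\nabla^2_\rho$ (the Hessian in $\rho$) to the same partition function. The first derivative gives $N(\tfrac{\alpha}{N} - \mu)\,e^{\langle\alpha,\rho\rangle-N\phi}$ as above; differentiating again, the product rule yields two terms — one where the derivative hits the factor $N(\tfrac{\alpha}{N}-\mu)$, producing $-N\nabla^2_\rho\phi = -N\nabla^2_\rho\mu \cdot$(the kernel term)... more precisely $\nabla_\rho\big(N(\tfrac{\alpha}{N}-\mu)\big) = -N\nabla^2_\rho\phi$, and one where it hits the exponential, producing $N^2(\tfrac{\alpha}{N}-\mu)^{\otimes 2}$. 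Collecting, $\nabla^2_\rho\Pi_{h^N} = -N(\nabla^2_\rho\phi)\,\Pi_{h^N} + N^2 \sum_\alpha (\tfrac{\alpha}{N}-\mu)^{\otimes 2}\frac{e^{\langle\alpha,\rho\rangle-N\phi}}{\QQ_{h^N}(\alpha)}$; solving for the quadratic sum and dividing appropriately gives the stated identity $\frac{1}{N}\Pi_{h^N}\nabla^2_\rho\phi + \frac{1}{N^2}\nabla^2\Pi_{h^N}$.

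The only subtlety — and the step I would flag as the main point requiring care rather than a genuine obstacle — is the bookkeeping of signs and the fact that the Hessian $\nabla^2_\rho\phi$ equals $H_\phi(\rho)$, together with confirming that $\nabla^2_\rho$ acting on the exponential genuinely produces the symmetric tensor square $(\tfrac{\alpha}{N}-\mu)^{\otimes 2}_{ij}$ with the stated indexing. Since all sums are finite (over $\alpha \in NP\cap\Z^m$), there are no convergence or interchange-of-limit issues, and the computation is purely formal differentiation of an exponential sum. One should also note, as the text already does, that on the boundary of the open orbit the same formulas hold with $\partial/\partial\rho_j$ replaced by derivatives in affine coordinates, which follows by the continuity argument used for (\ref{BERNBERGFS}) and Proposition \ref{ID}; I would reference \cite{SoZ} for those modifications rather than reproduce them.
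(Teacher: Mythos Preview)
Your proposal is correct and follows essentially the same approach as the paper: differentiate the diagonal Bergman kernel $\Pi_{h^N}(e^{\rho/2},e^{\rho/2})=\sum_\alpha e^{\langle\alpha,\rho\rangle-N\phi}/\QQ_{h^N}(\alpha)$ once in $\rho$ for part (1), then differentiate the resulting identity once more and use the product rule for part (2), invoking $\mu=\nabla_\rho\phi$ throughout. The paper's proof is slightly more terse but the computation and the rearrangement to isolate the quadratic sum are identical to yours.
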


\begin{proof}

 To prove (1), we differentiate (\ref{MMDEF})  to obtain
$$\begin{array}{lll} \nabla_{\rho} \Pi_{h^N}(e^{\rho/2}, e^{ \rho/2} ) & = &
N \sum_{\alpha \in N P \cap \Z^m} (\frac{\alpha}{N} -
\mu(e^{\rho/2})) \frac{e^{\langle \alpha, \rho \rangle  - N
\phi(e^{\rho/2})}  \Pi_{h^N}(e^{\rho/2}, e^{\rho/2})
}{\QQ_{h^N}(\alpha)}.
\end{array}$$

To prove (2), we take a second  derivative of (1)  in $\rho$  to
get
$$\begin{array}{l} \nabla_{\rho}^2  \Pi_{h^N}(e^{\rho/2}, e^{ \rho/2}
)=  - N \nabla \mu_h (e^{\rho/2}))  \Pi_{h^N}(e^{\rho/2}, e^{\rho/2}) \\ \\
+   N^2 \sum_{\alpha \in N P \cap \Z^m} (\frac{\alpha}{N} - \mu_h
(e^{\rho/2}))^{\otimes 2} \frac{e^{\langle \alpha, \rho \rangle -
N \phi(e^{\rho/2})} }{\QQ_{h^N}(\alpha)}.
\end{array}$$

\end{proof}

We now evaluate these functions geometrically:

\begin{prop} \label{COMPAREPIT2} We have:

\begin{enumerate}

\item $I^{(1)}_{h^N}(z) = C_m N^{m-2}  \nabla S (z) + O(N^{m-3});$

\item $I^{(2)}_{h^N}(z)   = N^{m-1}  \nabla^2_{\rho} \phi +
N^{m-2} S(z) \nabla^2_{\rho} \phi   + O(N^{m -3}) $.

\end{enumerate}
\end{prop}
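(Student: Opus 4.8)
The strategy is to combine Proposition~\ref{COMPAREPIT}, which expresses $I^{(1)}_{h^N}$ and $I^{(2)}_{h^N}$ in terms of $\rho$-derivatives of the diagonal Bergman kernel $\Pi_{h^N}(e^{\rho/2}, e^{\rho/2})$, with the diagonal Bergman kernel expansion \eqref{TYZ} and the explicit formula $a_1 = \tf12 S$. For part (2), Proposition~\ref{COMPAREPIT}(2) gives
$$ I^{(2)}_{h^N}(z) = \frac{1}{N}\,\Pi_{h^N}(e^{\rho/2},e^{\rho/2})\,\nabla^2_\rho\phi + \frac{1}{N^2}\,\nabla^2_\rho \Pi_{h^N}(e^{\rho/2},e^{\rho/2}). $$
Inserting \eqref{TYZ}, the first term is $\tf{1}{N}\cdot\tf{N^m}{\pi^m}(1 + a_1 N^{-1} + \cdots)\nabla^2_\rho\phi = \tf{1}{\pi^m}\big(N^{m-1}\nabla^2_\rho\phi + N^{m-2}\tf12 S\,\nabla^2_\rho\phi + \cdots\big)$; absorbing the $\pi^{-m}$ into normalization (or keeping it, as the paper seems to drop these constants) recovers the stated leading and subleading terms. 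The second term $\tf{1}{N^2}\nabla^2_\rho\Pi_{h^N}$ is $O(N^{m-2})$ after differentiating the expansion twice in $\rho$ (the leading $N^m/\pi^m$ is constant in a local frame only up to the known normalization — more precisely one differentiates $\Pi_{h^N}(e^{\rho/2},e^{\rho/2})$, whose leading term is $N^m$ times a smooth function, so two derivatives still give $O(N^m)$, hence $\tf{1}{N^2}\nabla^2_\rho\Pi_{h^N} = O(N^{m-2})$), and a careful accounting shows it contributes only to the $O(N^{m-2})$ level and below; since we only claim accuracy through $O(N^{m-3})$ for (2), this term must be tracked one order further.

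\textbf{Handling part (1).} Proposition~\ref{COMPAREPIT}(1) gives $I^{(1)}_{h^N}(z) = \tf{1}{N}\nabla_\rho\Pi_{h^N}(e^{\rho/2},e^{\rho/2})$. The key point is a cancellation: one inserts \eqref{TYZ} and differentiates. Writing $\Pi_{h^N}(e^{\rho/2},e^{\rho/2}) = \tf{N^m}{\pi^m}(1 + a_1(z)N^{-1} + a_2(z)N^{-2} + \cdots)$ and applying $\nabla_\rho$, the $N^m$-order term is killed because $1$ is constant, so $\nabla_\rho\Pi_{h^N} = \tf{N^{m-1}}{\pi^m}\nabla_\rho a_1 + O(N^{m-2})$, giving $I^{(1)}_{h^N} = \tf{1}{N}\cdot\tf{N^{m-1}}{\pi^m}\nabla_\rho a_1 + O(N^{m-3}) = \tf{1}{2\pi^m}N^{m-2}\nabla_\rho S + O(N^{m-3})$, which is the claimed formula with $C_m = \tf{1}{2\pi^m}$ (or $C_m = \tf12$ in the convention where the $\pi^{-m}$ is suppressed). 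One subtlety: in a local frame the diagonal $\Pi_{h^N}(e^{\rho/2},e^{\rho/2})$ as written in \eqref{TYZ} is the invariantly-normalized density, so $a_1 = \tf12 S$ genuinely holds and $\nabla_\rho a_1 = \tf12\nabla_\rho S$; one should double-check whether $\nabla_\rho$ versus $\nabla_x$ matters, but since $S$ is a scalar function on $M$ the gradient in either coordinate system differs only by the Jacobian $G_\phi$ or $H_\phi$, which can be absorbed or carried along as the paper's notation dictates.

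\textbf{The main obstacle.} The genuinely delicate point is bookkeeping the orders precisely enough: for (1) the leading-order cancellation means the \emph{answer} sits two full orders below $N^m$ inside a formula with prefactor $\tf1N$, so an error of $O(N^{m-3})$ in $I^{(1)}$ corresponds to controlling $\nabla_\rho\Pi_{h^N}$ to relative order $N^{-2}$ — this requires knowing that the expansion \eqref{TYZ} is differentiable term-by-term in $\rho$ (true by the standard Boutet de Monvel--Sj\"ostrand theory, which gives the expansion in the $C^\infty$ topology) and that $\nabla_\rho$ of the $O(N^{-2})$-remainder in \eqref{TYZ} is still $O(N^{-2})$ after multiplication by $N^m$. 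Similarly for (2), the term $\tf{1}{N^2}\nabla^2_\rho\Pi_{h^N}$ genuinely contributes at order $N^{m-2}$: one has $\nabla^2_\rho\Pi_{h^N} = \tf{N^m}{\pi^m}\nabla^2_\rho(1) + \cdots = O(N^{m-1})$ from the subleading $a_1$-term and $O(N^m)$ only if $\nabla^2_\rho$ of the leading density is nonzero — but the leading density is $1$ in the invariant normalization, so $\nabla^2_\rho$ of it vanishes and the term is $O(N^{m-1})\cdot N^{-2}=O(N^{m-3})$, below the claimed accuracy; hence the two stated terms in (2) come entirely from $\tf1N\Pi_{h^N}\nabla^2_\rho\phi$. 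The remaining work is then purely the substitution $a_1 = \tf12 S$ together with a uniform remainder estimate, which follows from the uniform (up-to-boundary) control already invoked in \cite{SoZ}.
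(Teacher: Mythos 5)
Your proposal is correct and follows essentially the same route as the paper: both prove the result by substituting the on-diagonal expansion \eqref{TYZ} (with $a_1 = \tfrac12 S$) into the identities of Proposition~\ref{COMPAREPIT}, using that the leading density is constant so its $\rho$-derivatives vanish, which is what produces the order drop in part (1) and relegates the $\tfrac{1}{N^2}\nabla^2_\rho\Pi_{h^N}$ term to $O(N^{m-3})$ in part (2). The only difference is cosmetic — you initially overestimate that second term as $O(N^{m-2})$ before correcting yourself, and you are somewhat more careful than the paper about the $\pi^{-m}$ and $\tfrac12$ normalization constants and about term-by-term differentiability of the expansion.
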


\begin{proof}

From (\ref{TYZ}) it follows that

$$\begin{array}{l} \nabla_{\rho} \Pi_{h^N}(z,z)
=
N^{m-1} \nabla S(z) + O(N^{m-2}), \\  \\
\nabla_{\rho}  \mu_h (z)  \Pi_{h^N}(z,z)
 = N^m \nabla \mu_h + N^{m-1} C_m S(z) \nabla \mu_h +
 O(N^{m-2});\\ \\
 \nabla_{\rho}^2  \Pi_{h^N}(e^{\rho/2}, e^{ \rho/2}) = N^{m-1}
 \nabla^2_{\rho} S(z) + O(N^{m-2}).
\end{array}$$
We also use that  $\nabla \mu_h (e^{\rho/2})) = \nabla^2 \phi$.

\end{proof}

To complete the second proof of Theorem \ref{BBa}, it suffices to
observe that the remainder in (\ref{EXP2}) after expanding to
order $M$ is  $O(N^{m -  M/2} (\log N)^M), $ which follows from
the fact that $\rcal(M, N, z) \leq C_f N^m   I_{h^N}^{\nu +
1}(z).$ Therefore
\begin{equation} \label{LAST} \begin{array}{lll}  \ncal_{h^N}(f)(\mu(z)) & =
& f(\mu(z)) \Pi_{h^N}(z,z)     \\ && \\
& + &   \sum_{|\beta| = 1} D^{\beta} f(\mu(z))
I^{(\beta)}_{h^N}(\mu(z)) \\ && \\
& + & \frac{1}{2} \sum_{|\beta| = 2} D^{\beta} f(\mu(z))
I_{h^N}^{\beta}
(\mu(z)) + O(N^{- 3/2}(\log N)^3)  \\ && \\
& = &  N^m f(\mu(z)) + N^{m -1} \left(f(\mu(z)) S(z) + \nabla
\mu_h \cdot \nabla^2 f(\mu(z)) \cdot  \right) +  O(N^{m -
3/2}(\log N)^3).
\end{array}
\end{equation}

\section{\label{EMa} Dedekind-Riemann sums over lattice points: Proof of Corollary \ref{EM}}

As noted in \S \ref{CORPROOF},  the existence of an asymptotic
expansion for the Riemann sums follows immediately from theorem
\ref{NUM}. However, it is an expansion in terms of integrals of
curvature invariants against derivatives of $f$ over $P$. The
purpose of this section is to prove that the first two terms can
be put in the form stated in Corollary \ref{EM}, and thus to
clarify the relation between the Bernstein and Euler-MacLaurin
approaches to lattice point sums.

  We begin the calculation by
integrating (\ref{LAST}) over $M$ with respect to
$\frac{\omega^m}{m!}$ and recalling that the pushforward to $P$ of
this volume form under the moment map $\mu_h$ is Lebesgue measure
$dx$ on $P$. Also, $\Pi_{h^N}(z,z)$ is constant on  $\T$- orbits,
so $\Pi_{h^N}(\mu^{-1}(x), \mu^{-1}(x))$ is well-defined although
the inverse image is an orbit. The same is true for geometric
functions such as the scalar curvature. We also recall that $d_N =
\dim H^0(M, L^N)$). Then by Proposition \ref{COMPAREPIT2} only the
zeroth and second order terms of the Taylor expansion of $f$
contribute to the $N^{-1}$ term of the Riemann sum expansion, and
we have
\begin{equation} \label{LAST} \begin{array}{lll}  \sum_{\alpha \in N P} f(\frac{\alpha}{N})  & =
&   \int_P f(x) \Pi_{h^N}(\mu^{-1}(x), \mu^{-1}(x))  dx \\ && \\ &
+ & \frac{1}{2} \sum_{|\beta| = 2} \int_P D^{\beta} f(x)
I_{h^N}^{\beta}
(x) dx + O(N^{- 3/2}(\log N)^3)  \\ && \\
& = &  \frac{N^m}{\pi^m} \int_P  f(x) dx +  \frac{N^{m -1}}{\pi^m}
\int_P \frac{1}{2} f(x) S(\mu^{-1}(x)) \\ && \\ && + \frac{1}{2}
\langle \nabla_{\rho} \mu_h (\mu^{-1}(x), \nabla_x^2 f(x) \rangle
dx  + O(N^{m - 3/2}(\log N)^3).
\end{array}
\end{equation}
Here, $\langle \nabla \mu_h,  \nabla^2 f(\mu(z)) \rangle $ denotes
the Hilbert-Schmidt inner product of the tensors.

By  Legendre duality, the Hessians of the \kahler potential and
symplectic potentials are inverses, i.e.
\begin{equation} \nabla_{\rho} \mu_h (\mu^{-1}(x) = (\nabla^2
u_{\phi} (x))^{-1}. \end{equation} Hence,
\begin{equation} \langle \nabla_{\rho} \mu_h (\mu^{-1}(x), \nabla_x^2 f(x) \rangle
dx  = \int_P \sum_{j k} u_{\phi}^{j k} f_{, j k}  dx.
\end{equation}
Further, we recall (cf. \cite{D2,A}) that the scalar curvature of
a toric \kahler metric is given in terms of the symplectic
potential by
\begin{equation} \label{scalarsymp2}
S = -  \sum_{j,k} \frac{\partial^2 u_{\phi} ^{jk}}{\partial x_j
\partial
 x_k}\,,
\end{equation}
where $u_{\phi} ^{jk},\ 1\leq j,k\leq n$ are the entries of the
inverse of the matrix $\nabla^2 u_{\phi}$. See  \cite{D2} (3.1.4).

We now use the following integration by parts formula due to
Donaldson: \begin{lem} \label{DON} (\cite{D2}, Lemma 3.3.5) For
any symplectic potential $u_{\phi}$ and $f \in C^{\infty}$,
$\sum_{j k} u_{\phi}^{j k} f_{, j k} \in L^1(P)$ and
$$\int_P \sum_{j k} u_{\phi}^{j k} f_{, j k} = \int_P \sum_{j k} (u_{\phi}^{j k})_{,jk}
fdx + \int_{\partial P} f d\sigma, $$ where $d\sigma$ is the
measure defined in Corollary \ref{EM}. \end{lem}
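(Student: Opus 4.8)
The plan is to prove the identity by integrating by parts twice over $P$, arranging matters so that the boundary term produced by the first integration vanishes while the one produced by the second is exactly $\int_{\partial P} f\, d\sigma$. Abbreviate $U^{jk}=u_\phi^{jk}$ for the entries of $G_\phi^{-1}=(\nabla^2_x u_\phi)^{-1}$, let $\nu$ be the outward unit normal to $\partial P$ and $dS$ Euclidean hypersurface measure. Then, formally,
\begin{equation*}
\int_P \sum_{jk} U^{jk} f_{,jk}\,dx = -\int_P \sum_{jk}(U^{jk})_{,j}\, f_{,k}\,dx + \int_{\partial P}\sum_{jk} U^{jk}\nu_j\, f_{,k}\,dS,
\end{equation*}
and a second integration by parts turns $-\int_P \sum_{jk}(U^{jk})_{,j} f_{,k}\,dx$ into $\int_P \sum_{jk}(U^{jk})_{,jk} f\,dx - \int_{\partial P}\sum_{jk}(U^{jk})_{,j}\nu_k f\,dS$. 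Thus everything reduces to two statements about $G_\phi^{-1}$ on each facet $F_r\subset\{\ell_r=0\}$: first, $G_\phi^{-1}v_r=0$ on $F_r$ (so the first boundary integral vanishes, since $\nu\parallel v_r$ there), and second, $\operatorname{div}(G_\phi^{-1}v_r)=1$ on $F_r$. Granting these, and using $\nu=-v_r/|v_r|$ together with $dS=|v_r|\,d\sigma$ on $F_r$ (the latter being the definition of the Leray measure $d\sigma$, since $\nabla\ell_r=v_r$ so the coarea formula gives $dx = |v_r|^{-1}\,d\ell_r\wedge dS$), one gets $-\sum_{jk}(U^{jk})_{,j}\nu_k\,dS = \operatorname{div}(G_\phi^{-1}v_r)\,d\sigma = d\sigma$ on each facet, and summing over $r$ produces $\int_{\partial P} f\,d\sigma$.

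For the two facet facts I would use the local structure of $u_\phi$ near the relative interior of $F_r$. By (\ref{CANSYMPOTa}) and the facts recalled from \cite{G,A,D2}, $u_\phi=u_0+g_\phi$ with $g_\phi\in C^\infty(\bar P)$, and near a point of $F_r$ every term $\ell_s\log\ell_s$ with $s\neq r$ is smooth; hence in affine coordinates $y=(y_1,y')$ with $y_1=\ell_r$ one has $u_\phi(y)=y_1\log y_1+\psi(y)$ with $\psi$ smooth, so $G_\phi=\tfrac1{y_1}e_1e_1^{\,T}+A(y)$ with $A=\nabla^2_y\psi$ smooth. Writing $G_\phi^{-1}=y_1\,(y_1A+e_1e_1^{\,T})^{-1}$ and expanding in $y_1$ gives $(G_\phi^{-1})^{11}=y_1+O(y_1^2)$, $(G_\phi^{-1})^{1k}=O(y_1)$ for $k\ge 2$, and $(G_\phi^{-1})^{kl}$ smooth and bounded for $k,l\ge 2$; this is precisely Guillemin's description of the boundary singularities of the symplectic potential. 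In these coordinates $v_r$ corresponds to $e_1$, so $G_\phi^{-1}v_r=\big((G_\phi^{-1})^{11},(G_\phi^{-1})^{21},\dots\big)=O(y_1)$ vanishes on $F_r$, which is the first fact. Differentiating, $\partial_{y_1}(G_\phi^{-1})^{11}\to 1$ while $\partial_{y_k}(G_\phi^{-1})^{k1}=\partial_{y_k}\big(y_1\cdot(\mbox{smooth})\big)=O(y_1)\to 0$ for $k\ge 2$; since the divergence with respect to Lebesgue measure is invariant under affine changes of coordinates, $\operatorname{div}(G_\phi^{-1}v_r)\to 1$ on $F_r$, which is the second fact.

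The main obstacle — and the reason this is a lemma rather than a one-line computation — is that the integrations by parts above are performed on $P$, which has corners, so $G_\phi^{-1}$ and its derivatives must be controlled near the codimension $\ge 2$ strata of $\partial P$. I would make the argument rigorous by exhausting $P$ by the subpolytopes $P_\epsilon=\{x: \ell_r(x)\ge\epsilon\ \mbox{for all }r\}$, on which $u_\phi$ and all derivatives of $U^{jk}$ are smooth, performing the two integrations by parts on $P_\epsilon$, and letting $\epsilon\to0$. The same local analysis, applied near each facet and — in coordinates adapted simultaneously to the $\ell_r$'s that vanish — near the mixed strata, shows that $\sum_{jk}U^{jk}f_{,jk}$ and $\sum_{jk}(U^{jk})_{,jk}f$ are bounded on $\bar P$, so the bulk integrals converge by dominated convergence; in particular $\sum_{jk}u_\phi^{jk}f_{,jk}\in L^1(P)$. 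For the boundary integrals one splits each face $\{\ell_r=\epsilon\}$ of $\partial P_\epsilon$ into the part that stays a fixed distance $\delta$ from all other facets — which converges as $\epsilon\to0$ to the corresponding piece of $\int_{F_r} f\,d\sigma$ — and the remaining neighbourhoods of the lower-dimensional faces, whose total contribution is $O(\delta)$ uniformly in $\epsilon$ (there $G_\phi^{-1}v_r$ and its first derivatives are bounded and the region shrinks with $\delta$); letting $\epsilon\to0$ and then $\delta\to0$ finishes the proof. This final boundary estimate is exactly the analysis near $\partial P$ carried out in \cite{D2}, to which I would refer for the remaining details.
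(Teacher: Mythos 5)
Your proof is correct, and it is worth noting that the paper does not actually prove this lemma itself: it cites Donaldson \cite{D2}, Lemma~3.3.5 verbatim, and the only genuinely new content in the paper near this point is the sign remark. What you have written is the natural integration-by-parts argument that underlies Donaldson's lemma, so there is no divergence of method to report; let me just check the key computational points. Your two facet facts are the Guillemin boundary conditions in invariant form. The local model $u_{\phi}=y_1\log y_1+\psi(y)$ with $\psi$ smooth near the relative interior of $F_r$, hence $G_{\phi}=y_1^{-1}e_1e_1^{T}+A$, does give $(G_{\phi}^{-1})^{11}=y_1+O(y_1^2)$, $(G_{\phi}^{-1})^{1k}=O(y_1)$, $(G_{\phi}^{-1})^{kl}$ smooth ($k,l\geq 2$), as one sees from the cofactor expansion $\det G_{\phi}=y_1^{-1}\det A'+O(1)$ with $A'$ the lower-right block. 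Interpreting $v_r=\nabla\ell_r$ as a covector (so that it transforms to $e_1$ under $y_1=\ell_r(x)$) makes the claim $G_{\phi}^{-1}v_r=O(\ell_r)$ literally the first column of $(G_{\phi}^{(y)})^{-1}$, and $\operatorname{div}$ with respect to Lebesgue measure is indeed affinely invariant when the vector field is transported as a tangent vector, so $\operatorname{div}(G_{\phi}^{-1}v_r)\to 1$ on $F_r$ follows from $\partial_{y_1}(G_{\phi}^{-1})^{11}\to 1$ and $\partial_{y_k}(G_{\phi}^{-1})^{k1}=O(y_1)$. The bookkeeping $\nu=-v_r/|v_r|$, $dS=|v_r|\,d\sigma$ then gives $-\sum_{jk}(u_{\phi}^{jk})_{,j}\nu_k\,dS=\operatorname{div}(G_{\phi}^{-1}v_r)\,d\sigma=d\sigma$, and the boundary term in the first integration by parts dies because $U\nu=O(\ell_r)$. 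In particular you have independently reproduced the $+$ sign, which is precisely the content of the paper's Remark~(i) after the lemma (that \cite{D2} has a $-$ but defined $d\sigma$ only up to sign). The one-dimensional sanity check is $\int_0^1 x(1-x)f''\,dx=-2\int_0^1 f\,dx + f(0)+f(1)$, consistent with your conclusion. The regularization by $P_\epsilon$ with the $\delta$-splitting near lower strata is the standard way to make the corners harmless, and the needed uniform bounds near codimension $\geq 2$ faces hold by the same cofactor analysis applied with several defining functions $\ell_{r}$ small simultaneously; this is the part of the proof you correctly defer to \cite{D2}.
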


Combining Lemma \ref{DON} and (\ref{scalarsymp2}) we obtain
$$\int_P \frac{1}{2} f(x) S(\mu^{-1}(x)) + \frac{1}{2}
\langle \nabla_{\rho} \mu_h (\mu^{-1}(x), \nabla_x^2 f(x) \rangle
dx  =  \frac{1}{2} \int_{\partial P} f d\sigma, $$ proving that
the two term expansion in Corollary \ref{EM} is correct.

\begin{rem}

\noindent{\bf (i)} We note that in \cite{D2} Lemma 3.3.5, the
boundary term is given the $-$ sign. However, the measure
$d\sigma$ was only defined there (page 307) up to sign. The sign
of this term is universal and by comparing with the
one-dimensional case, we see that it is positive. \medskip

\noindent{\bf (ii)} To connect this calculation to  the classical
one-dimensional case (\ref{1DIM}), and perhaps clarify the
notation, we note that its $N^{m-1}$  (with $m = 1$),
$$ \int_0^1 f(x) dx + \frac{1}{2} \int_0^1 (x -
x^2) f''(x) dx,   $$ may be expressed in terms of the Fubini-Study
\kahler potential and moment map as
$$\int_0^1  \frac{d}{d\rho} \mu_{FS}(\mu^{-1}(x)) f''(x) dx, \;\; x =
\mu(e^{\rho/2}), $$ since
$$\phi_{FS}(e^{\rho/2}) = \log (1 + e^{\rho}), \; \frac{d}{d\rho}
\phi_{FS}(e^{\rho/2}) = \mu_{FS}(e^{\rho/2}) = \frac{e^{\rho}}{1 +
e^{\rho}} = x, $$  and
$$\frac{d^2}{d\rho^2}
\phi_{FS}(e^{\rho/2}) = \frac{e^{\rho}}{(1 + e^{\rho})^2} = x (1 -
x). $$ Regarding $S$, we recall that it is the scalar curvature of
the metric $g_{1 \bar{1}}$ associated to the \kahler form
$\omega_{FS} = \frac{i}{2} \ddbar (1 + |z|^2) $, thus
$$ S = - \frac{\partial^2}{\partial z \partial \bar{z}} \log (1 +
|z|^2)^{-2} = 2 Tr g_{1 \bar{1}} = 2. $$
\end{rem}

\end{document}